\documentclass[11pt,reqno]{amsart}
\usepackage[T1]{fontenc}
\usepackage{lmodern}
\usepackage{amsmath,amssymb,mathtools}
\usepackage{microtype}
\usepackage{enumitem}
\usepackage{needspace}
\usepackage[hidelinks,bookmarksnumbered]{hyperref}
\hypersetup{pdftitle={Clique partitions and bounded simplicial defect},pdfauthor={Obinna Okechukwu},pdfsubject={Extremal graph theory and clique partitions},pdfkeywords={clique partition, chordal graph, simplicial vertex, fractional packing, stability}}
\setlist[enumerate]{leftmargin=*,itemsep=2pt,topsep=4pt}
\newtheorem{theorem}{Theorem}[section]
\newtheorem{lemma}[theorem]{Lemma}
\newtheorem{proposition}[theorem]{Proposition}
\newtheorem{corollary}[theorem]{Corollary}
\theoremstyle{definition}

\newtheorem{conjecture}[theorem]{Conjecture}
\theoremstyle{remark}

\numberwithin{equation}{section}
\DeclareMathOperator{\cp}{cp}
\DeclareMathOperator{\rsd}{rsd}
\DeclareMathOperator{\MaxCut}{MaxCut}
\newcommand{\ind}{\overline K}
\newcommand{\doi}[1]{\href{https://doi.org/#1}{\nolinkurl{#1}}}
\title[Clique partitions and bounded simplicial defect]{Clique partitions and bounded simplicial defect}
\author{Obinna Okechukwu}
\thanks{Cambridge, MA, USA}
\date{September 2026}
\subjclass[2020]{05C35, 05C70, 05C17}
\keywords{Clique partition, chordal graph, simplicial vertex, fractional packing, stability}
\begin{document}
\begin{abstract}
The clique partition number of a graph is the minimum number of complete subgraphs whose edge sets partition its edge set. We study graphs in which, in every induced subgraph and outside every prescribed clique, some vertex has a neighbourhood that becomes a clique after deleting at most $s$ vertices. The case $s=0$ is exactly the class of chordal graphs. For each fixed $s$, we prove that the maximum clique partition number at all sufficiently large orders $n$ is
$\lfloor(n+s)(n+s+1)/6\rfloor-\binom{s+1}{2}$, and determine all equality graphs. The same expression is an upper bound up to an additive constant depending only on $s$ at every order. In particular, every chordal graph has clique partition number at most $n^2/6+n/6+O(1)$, answering a question of Erd\H{o}s, Ordman and Zalcstein. We also prove structural stability for sublinear defect and a sharp finite-order theorem for integer signed clique functionals. The proof combines signed fractional localization with an edge-disjoint triangle construction; only a qualitative fractional-packing approximation is required.
\end{abstract}
\maketitle

\section{Introduction}

For a finite simple graph $G$, let $\cp(G)$ be the minimum number of cliques of order at least two whose edge sets partition $E(G)$. Erd\H{o}s, Goodman and P\'osa proved that $\cp(G)\le\lfloor |G|^2/4\rfloor$, with a partition using only edges and triangles \cite[Theorem~4]{EGP}. Erd\H{o}s, Ordman and Zalcstein asked for the corresponding extremal bound on chordal graphs. They exhibited examples with $n^2/6+O(n)$ parts and proved an upper bound $(1/4-\varepsilon)n^2$ for some absolute $\varepsilon>0$ \cite{EOZ}. Their question, now listed as Erd\H{o}s Problem~81 \cite{Bloom}, asks whether $n^2/6+O(n)$ parts always suffice. We prove a stronger bound and extend it to a hereditary relaxation of chordality.

A vertex $v$ of a graph $H$ is \emph{$s$-simplicial} if
\[
 d_H(v)-\omega(H[N_H(v)])\le s,
\]
where the clique number of the empty graph is zero. Define the \emph{rooted simplicial defect} $\rsd(G)$ to be the least nonnegative integer $s$ such that, in every induced subgraph $H$ and outside every proper clique of $H$, there is an $s$-simplicial vertex. The empty clique is allowed. Equivalently, every noncomplete induced subgraph contains two nonadjacent $s$-simplicial vertices (Lemma~\ref{lem:rooted}). Dirac's simplicial-vertex theorem gives $\rsd(G)=0$ exactly for chordal graphs \cite{Dirac}.

Put
\begin{equation}\label{eq:profileQ}
 F_n=\left\lfloor\frac{n(n+1)}6\right\rfloor,
 \qquad Q_s(n)=F_{n+s}-\binom{s+1}{2}.
\end{equation}
The join of two vertex-disjoint graphs is denoted by $\vee$, and their disjoint union by $\sqcup$.

\begin{theorem}\label{thm:main}
For each integer $s\ge0$ there are constants $K_s\ge0$ and $N_s$ with the following properties. Every graph $G$ of order $n$ with $\rsd(G)\le s$ satisfies
\begin{equation}\label{eq:constantbound}
 \cp(G)\le Q_s(n)+K_s.
\end{equation}
For $n\ge N_s$, the maximum is exactly $Q_s(n)$. Equality at these orders holds precisely for the graphs
\begin{equation}\label{eq:extremal}
 \left(K_{k-s}\sqcup\ind_s\right)\vee\ind_{n-k},
 \qquad k\text{ a nearest integer to }\frac{2(n+s)+1}{6}.
\end{equation}
\end{theorem}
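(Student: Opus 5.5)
The plan has three parts: a lower bound from the graphs in \eqref{eq:extremal}, an upper bound by induction on $n$ through an $s$-simplicial vertex, and a stability analysis for the exact value and the equality cases. Throughout I use the reformulation of Lemma~\ref{lem:rooted}: every noncomplete induced subgraph has two nonadjacent $s$-simplicial vertices.

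\textbf{Lower bound.} Let $G=(K_{k-s}\sqcup\ind_s)\vee\ind_{n-k}$. First I check that $\rsd(G)\le s$. A vertex of $\ind_{n-k}$ has neighbourhood $K_{k-s}\sqcup\ind_s$. Its clique number is $k-s$, so its defect is exactly $s$. A vertex of $\ind_s$ has an independent neighbourhood, so its defect is $n-k-1$, which is large. Hence the check has to pass to induced subgraphs $H$ and to prescribed proper cliques $C$. A clique $C$ meets $\ind_{n-k}$ in at most one vertex. Therefore either some vertex of $\ind_{n-k}\cap H$ lies outside $C$ and has defect at most $s$ in $H$, or $H$ meets $\ind_{n-k}$ in at most one vertex. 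In the second case $H$ is chordal after deleting at most one vertex, and the vertices of $K_{k-s}$ are simplicial there.

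For the value of $\cp(G)$, every edge incident to $\ind_s\cup\ind_{n-k}$ lies in a clique meeting $\ind_{n-k}$ in at most one vertex. A counting argument in the style of Erd\H{o}s--Ordman--Zalcstein then bounds how many edges such a clique can cover. The intended count is: each $w\in\ind_{n-k}$ can absorb the edges of $K_{k-s}$ through triangles, while the $s(n-k)$ edges to $\ind_s$ must be single parts. Optimizing over $k$ should give exactly $F_{n+s}-\binom{s+1}{2}$, with $k$ the nearest integer to $(2(n+s)+1)/6$. The matching upper bound for this specific graph comes from an explicit edge-disjoint triangle packing. That packing is the "edge-disjoint triangle construction" of the abstract: decompose most of $K_{k-s}$ into triangles meeting vertices of $\ind_{n-k}$.

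\textbf{Upper bound with constant $K_s$.} I will argue by induction on $n$ with a potential function. Pick an $s$-simplicial vertex $v$ whose neighbourhood is $N=C\cup S$, where $C$ is a clique and $|S|\le s$. The edges from $v$ to $C$ form one clique $C\cup\{v\}$, and the edges to $S$ cost at most $s$ further parts. Deleting $v$ therefore costs at most $s+1$ parts. This crude recursion only gives $O(n)$ per step, which is far too weak: summed over $n$ steps it yields $O(n^2)$ with the wrong constant.

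The real argument must exploit that the clique $C\cup\{v\}$ can be broken up. The edges inside $C$ already sit in parts from the remainder, and we can re-route them through $v$ as triangles. This is the signed fractional localization. Write $\cp(G)\le|E(G)|-2\tau(G)$, where $\tau$ is a maximum packing of edge-disjoint triangles in a suitable auxiliary structure. Then compare $|E|-2\nu^*$ (fractional) against the quadratic profile, using a weight on each vertex given by its position in the $s$-simplicial elimination order. The fractional bound reduces to an LP whose extremal solution is the profile $n^2/6$. A Haxell--R\"odl style qualitative approximation ($\nu\ge\nu^*-o(n^2)$), applied locally to the bounded-defect pieces, converts it to an integral bound with $o(n^2)$ loss. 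A bootstrapping step then pushes the loss down to $O_s(1)$.

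\textbf{Exactness and main obstacle.} For the exact value and the equality cases at $n\ge N_s$, I will use stability. If $\cp(G)\ge Q_s(n)$, the LP near-optimality forces $G$ to be $o(n^2)$-close to the extremal join. Local cleanup (vertex-by-vertex reassignment, using that any deviation saves $\Omega(n)$ parts) then forces exact equality with \eqref{eq:extremal}. I expect the main obstacle to be the upper-bound step: setting up the signed fractional functional so that the $s$-simplicial elimination yields precisely the profile $F_{n+s}$, including the shift by $s$. Once that bound holds with $o(n^2)$ error, stability and cleanup are routine though tedious.
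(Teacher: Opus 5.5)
Your treatment of the attaining graphs matches the paper in substance. For the lower bound, a part through one page and $j$ core vertices covers $j$ spokes and consumes $\binom j2$ core edges, and $j\le 1+\binom j2$. For the construction, a proper edge-colouring of $K_{k-s}$ with $k-s\le n-k$ colours must route \emph{every} core edge through a page, not most of them.

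The gap is in the upper bound and the exactness. There you name tools but not mechanisms, and you put the difficulty in the wrong place. Through the Haxell--R\"odl conversion, any fractional bound reaches $\cp(G)$ only up to an $o(n^2)$ loss. The natural signed localization (a maximizing clique-star $C\subseteq N(v)$ plus an $s$-defective order ending in $C$) gives only $q_L^*(G)\le M_n+sn$. Its linear coefficient $s+\tfrac{1}{6}$ exceeds the true $\tfrac{2s+1}{6}$ when $s\ge1$. You give no way to reach $F_{n+s}$ fractionally, and even then your unexplained ``bootstrapping step'' to $O_s(1)$ would carry the whole theorem.

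The missing idea is a smallest-counterexample reduction. Let $G$ have least order violating $\cp(G)\le Q_s(n)+K$. Deleting any vertex $v$ and covering its edges singly forces $d(v)\ge Q_s(n)-Q_s(n-1)+1>n/3$. An exact rigidity statement for graphs with $\delta(G)\ge n/3-o(n)$ and $\cp(G)\ge Q_s(n)$ then gives $\cp(G)=Q_s(n)$, a contradiction for large $K$. Once $K_s$ exists, the same deletion gives $\delta(G)\ge n/3-K_s-1$ for the equality classification. So the constant bound is a consequence of exact rigidity, not a step before it.

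That rigidity is not routine cleanup either. ``Any deviation saves $\Omega(n)$ parts'' requires actually constructing partitions of graphs that are only $o(n^2)$-close to the join. You would need three ingredients you do not supply.
\begin{enumerate}
\item The relaxation must contain cliques of order at least four, and up to a large fixed $L$ for small stability error. If you pack only triangles, as your $|E(G)|-2\tau(G)$ suggests, then $K_n$ has fractional value about $n^2/6$. Near-optimality then cannot force closeness to the join.
\item You need explicit near-extremal partitions. Galvin's list-edge-colouring theorem extends every core edge, edge-disjointly, to a triangle through an exterior vertex despite missing cross pairs. A balanced edge-colouring of the exterior, with a random injection of core vertices into its colours, absorbs exterior edges. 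Together these give at most $cb-D-h-\theta m$ parts with $\theta>0$, where $D,h,m$ count missing cross pairs, core edges and exterior edges.
\item You need to control a bounded exceptional set, and this is where $s$ enters the linear term: edit distance $o(n^2)$ cannot see the $s$ vertices of $\ind_s$. One must show that the normalized (core, exterior) degrees of the exceptional vertices tend to $(0,2)$ for exactly $s$ of them, and to $(1,0)$ or $(1,2)$ for the rest.
\end{enumerate}
The paper obtains the third point from the incidence bound $e(X,R)\le s|R|+|X|(s+\omega(G[R]))$, for exceptional vertices $X$ missing many core vertices, again via an order ending in the core. A finite optimization over the limiting profiles then pins them down. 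Finally, it excludes $s+1$ disjoint nonedges with $s+2$ independent common neighbours, so that the enlarged core is a clique apart from those $s$ vertices. Without these pieces, your proposal establishes neither the constant bound nor the equality characterization.
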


In particular, the extremal value for fixed $s$ is
\[
 \frac{n^2}{6}+\frac{2s+1}{6}n-\frac{s(s+1)}3+O(1).
\]
The coefficient of the linear term is therefore determined by the defect.

\begin{corollary}\label{cor:chordal}
Every chordal graph $G$ of order $n$ satisfies
\[
 \cp(G)\le\left\lfloor\frac{n(n+1)}6\right\rfloor+O(1).
\]
For all sufficiently large $n$, the maximum is $F_n$, and the equality graphs are exactly $K_k\vee\ind_{n-k}$ with $k$ a nearest integer to $(2n+1)/6$.
\end{corollary}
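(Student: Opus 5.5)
The corollary is the case $s=0$ of Theorem~\ref{thm:main}. The plan is to check the three ingredients of that specialization, namely the hypothesis, the bound, and the extremal family, and then read off the conclusion.

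First, the hypothesis. By Dirac's theorem, as recalled after the definition of $\rsd$, a graph is chordal if and only if $\rsd(G)=0$. If one only wants to use the easy direction, it suffices to observe that every induced subgraph of a chordal graph is chordal. Dirac's theorem then says every noncomplete chordal graph has two nonadjacent simplicial vertices. Hence outside any proper clique $C$ of $H$ there is a simplicial vertex: if $H$ is complete, any vertex outside $C$ is simplicial; otherwise one of the two nonadjacent simplicial vertices lies outside $C$, because $C$ cannot contain both. A simplicial vertex is exactly a $0$-simplicial vertex, since its neighbourhood is a clique and so $d_H(v)=\omega(H[N_H(v)])$. Thus every chordal $G$ satisfies $\rsd(G)\le 0$, and Theorem~\ref{thm:main} applies with $s=0$.

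Second, the bound and the extremal value. With $s=0$ we have $Q_0(n)=F_n-\binom{1}{2}=F_n=\lfloor n(n+1)/6\rfloor$. Inequality~\eqref{eq:constantbound} therefore gives $\cp(G)\le F_n+K_0$, which is the asserted bound with $O(1)=K_0$. For $n\ge N_0$, Theorem~\ref{thm:main} says that the maximum of $\cp$ over graphs with $\rsd\le0$, that is, over chordal graphs, is exactly $F_n$.

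Third, the equality graphs. For $s=0$ the family~\eqref{eq:extremal} becomes $(K_k\sqcup\ind_0)\vee\ind_{n-k}=K_k\vee\ind_{n-k}$, with $k$ a nearest integer to $(2n+1)/6$. These graphs are indeed chordal, since $K_k\vee\ind_{n-k}$ is a split graph. This is consistent with, though not needed for, the characterization, because Theorem~\ref{thm:main} already characterizes equality within the class $\rsd\le0$.

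No step here is a genuine obstacle. The only point needing care is the identification of the class $\rsd=0$ with chordal graphs, and in particular the rooted condition ``outside every proper clique'', which is handled by the two-nonadjacent-simplicial-vertices form of Dirac's theorem.
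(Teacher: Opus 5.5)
Your proposal is correct and matches the paper's proof, which identifies chordal graphs with the class $\rsd(G)=0$ (via Lemma~\ref{lem:rooted} and Dirac's theorem) and substitutes $s=0$ in Theorem~\ref{thm:main}, using $Q_0(n)=F_n$ and the reduction of \eqref{eq:extremal} to $K_k\vee\ind_{n-k}$. One small correction: if you rely only on the easy inclusion chordal $\Rightarrow\rsd\le0$, then your check that $K_k\vee\ind_{n-k}$ is chordal is needed, not optional, to conclude that the maximum over chordal graphs is $F_n$ and that these graphs are the equality cases; since you include that check, the argument is complete.
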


The graphs $K_k\vee\ind_{n-k}$ are the extremal examples already underlying \cite{EOZ}. Split graphs were studied separately by Wallis and Wu \cite{WallisWu} and by Chen, Erd\H{o}s and Ordman \cite{CEO}. The computational difficulty remains even within split graphs: deciding their clique partition number is NP-complete \cite{WallisWu}. Our conclusions are extremal theorems, not formulas for the optimum of every graph in the class.

The hereditary condition in Theorem~\ref{thm:main} does not impose a useful cut or a near-extremal form in advance. For example, $\rsd(K_m\vee H)=\rsd(H)$, so the class with defect at most one contains joins of arbitrarily large cliques with disjoint unions of cycles. The subclass $s=0$ is understood through perfect elimination orders \cite{Dirac,FulkersonGross}; the proof below uses orders ending in a prescribed clique. Generalized elimination orderings for hereditary graph classes were studied systematically by Aboulker, Charbit, Trotignon and Vu\v{s}kovi\'c \cite{ACTV}. After reversing their order convention, an unrooted $s$-defective order is an $\mathcal F_s$-elimination order for the family $\mathcal F_s$ of graphs $J$ with $|J|-\omega(J)>s$; our rooted condition requires such an order ending at every prescribed clique. Distance-based notions of simplicial elimination characterize graphs with bounded induced-cycle length \cite{Krithika}. Our definition instead counts deleted neighbours; cycles of arbitrary length already have rooted defect at most one, since every vertex of every induced subgraph has neighbourhood deficiency at most one.

We also obtain a stability statement in which the defect may grow with the order. For a clique $A\subseteq V(G)$, write
\begin{equation}\label{eq:holesA}
 D_A=|A|\,|V(G)\setminus A|-e_G(A,V(G)\setminus A).
\end{equation}

\begin{theorem}\label{thm:stabilityintro}
Let $|G_j|=n_j\to\infty$. If
\[
 \rsd(G_j)=o(n_j),\qquad
 \cp(G_j)\ge\frac{n_j^2}{6}-o(n_j^2),
\]
then there are cliques $A_j\subseteq V(G_j)$ such that
\[
 |A_j|=\frac{n_j}{3}+o(n_j),\qquad
 D_{A_j}+e(G_j-A_j)=o(n_j^2).
\]
There is also a function $\epsilon(n)\to0$, independent of the graph, such that every graph of order $n\ge8$ satisfies
\begin{equation}\label{eq:allgraph}
 \cp(G)\le\frac{(2n+1)^2}{24}+n\rsd(G)+\epsilon(n)n^2.
\end{equation}
\end{theorem}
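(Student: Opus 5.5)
We will prove both parts from a single quantitative inequality, established for every graph $G$ of order $n$ with $s=\rsd(G)$. The inequality produces a clique $A$ and says that $\cp(G)$ is controlled by a functional of $(|A|,D_A,e(G-A))$, up to an error $ns+\epsilon(n)n^2$. Stability then follows by analysing the maximiser of this functional, and \eqref{eq:allgraph} is the inequality itself with the functional maximised.

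\textbf{Step 1 (elimination and signed localisation).} Choose a maximum clique $A$. Applying the rooted condition repeatedly outside $A$ gives an ordering $v_1,\dots,v_{n-|A|}$ of $V(G)\setminus A$. In this ordering each $v_i$, within $H_i=G[\{v_i,\dots\}\cup A]$, has a neighbourhood that becomes a clique $C_i$ after deleting a set $S_i$ with $|S_i|\le s$. The obvious bound is then
\[
\cp(G)\le\sum_i \bigl(1+|S_i|\bigr)+1,
\]
which costs one clique $\{v_i\}\cup C_i$ plus one edge for each deleted neighbour, giving at most $n+ns$. This is far too weak. Following the abstract, we instead do the following:
\begin{enumerate}
\item Record the cliques $C_i$ as a signed family on $A$-rooted structures.
\item Merge the stars at the eliminated vertices. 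Edges from $B=V\setminus A$ to $A$ are covered by triangles $\{b,b',a\}$, and the edges $bb'$ are paired inside $G-A$.
\end{enumerate}
Concretely, write $G-A$ with edges $E'$ and let $\Gamma$ be the cross bipartite graph. We cover $\Gamma\cup E'$ by edge-disjoint triangles with one vertex in $A$, together with leftover edges. By the qualitative fractional packing approximation (Haxell--R\"odl type, which the abstract says suffices), an integral triangle packing loses only $o(n^2)$ against a fractional one.

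\textbf{Step 2 (the fractional programme).} We bound the fractional triangle packing of the cross graph plus $E'$ by a linear programme in $a=|A|$, $b=n-a$, the missing cross edges $D_A$, and $e(G-A)$. A heuristic count suggests
\[
\cp\lesssim 1+ab-D_A - t + e' - t + ns,\qquad t\le\min\{e',ab/2\}.
\]
Each triangle saves two edges and uses one edge of $G-A$; the defect enters only through the $|S_i|$-terms, giving $ns$. The parameter $e'$ is itself bounded in terms of $D_A$ and the defect, since dense $G-A$ together with simplicial elimination forces missing cross edges. Optimising gives maximum value $\max_a\bigl(a(n-a)-a(n-a)/2+\dots\bigr)$. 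Matching the extremal graph $K_k\vee\ind_{n-k}$ at $k\approx n/3$ yields $(2n+1)^2/24+O(n)$, with the precise $(2n+1)^2/24$ coming from the $\binom{a}{2}$-free terms. The main obstacle is this step: showing that the signed fractional localisation really reduces to a two-parameter concave programme, with the defect contributing only $ns$ additively. I would first try to prove that each eliminated vertex's star can be charged against triangles inside its own clique $C_i$, so that interactions between different $i$ cancel in the signed sum.

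\textbf{Step 3 (stability).} The programme of Step 2 is strictly concave with unique optimum at $a=n/3$, $D_A=0$, $e'=0$. Its value falls by $\Omega(\delta n^2)$ whenever $|a-n/3|\ge\delta n$ or $D_A+e'\ge\delta n^2$. So if $\cp(G_j)\ge n_j^2/6-o(n_j^2)$ and $n_j s_j=o(n_j^2)$, then the clique $A_j$ from Step 1 satisfies $|A_j|=n_j/3+o(n_j)$ and $D_{A_j}+e(G_j-A_j)=o(n_j^2)$. Finally, $\epsilon(n)$ is graph-independent because the only non-explicit loss is the packing approximation, which depends on $n$ alone.
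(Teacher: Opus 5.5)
There is a genuine gap. It is exactly the step you flag as the main obstacle: Step~2 is not merely unproved, because the partition scheme behind it cannot give the bound.

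Your preliminary estimate $\cp(G)\le\sum_i(1+|S_i|)+1$ is already invalid. The cliques $\{v_i\}\cup C_i$ overlap on edges inside $C_i$, so they form a cover, not a partition; for chordal books this estimate would give $\cp\le n+1$.

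The refined scheme keeps the root $A$ as one part and saves only through triangles $\{b,b',a\}$ that use an edge of $G-A$. Test it on the chordal book $K_k\vee\ind_{n-k}$. A maximum clique is the core plus one page, $G-A$ is edgeless, $t=0$, and your count is $1+k(n-k-1)$. At $k=\lfloor n/2\rfloor$ this is about $n^2/4$, far above $(2n+1)^2/24$. So no functional extracted from this scheme is bounded by $M_n+ns+o(n^2)$, and its optimum sits at $a=n/2$, not $n/3$.

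The saving $\binom k2$ in the extremal value comes from absorbing core edges into triangles through one exterior vertex, which your scheme never does. The paper realizes this saving on the dual side:
\begin{enumerate}
\item It bounds every signed weighting $z$ obeying the clique inequalities of orders $2,\dots,L$.
\item It localizes at a vertex $v$ and a clique $C\subseteq N(v)$ maximizing $\alpha=\sum_{u\in C}z_{vu}$. This root depends on $z$ and is not a maximum clique.
\item An $s$-defective order ending in $C$ gives each exterior vertex forward weight at most $\alpha+s$.
\item The triangle inequalities through $v$ give $Z(E(G[C]))\le\binom c2-(c-1)\alpha$.
\end{enumerate}
Together these give $W=Z(E(G))\le c(n-c)-\binom c2+s(n-c)-(n-2c+1)(c-\alpha)$. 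Hence $q_L^*(G)\le M_n+n\rsd(G)$, with $K_4$-averaging handling $c\ge(n+1)/2$. Only then is Haxell--R\"odl applied. Because the clique orders $3,\dots,L$ have different savings $\binom j2-1$, they are first bundled into a single template (Lemma~\ref{lem:rounding}).

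For stability there is a second missing piece. Even the correct deficit inequality \eqref{eq:localdeficit} controls only $|c-(2n+1)/6|$, $c-\alpha$ and $N_{\rm out}$; it does not control $D_C+e(G-C)$. The paper bounds $D_C$ by the row deficiencies, $B\le7\varrho+Z(E(G[R]))$ with $\varrho=M_n+sn-W$. It estimates $Z(E(G[R]))$ by a random cover with cliques of order up to $L$ (Lemma~\ref{lem:signedcover}), leaving an error of about $n^2/(2(L-1))$. It also shows that adjacent rows of small deficiency force $z_{xy}<-3/4$.

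This requires $L$ large depending on $\varepsilon$. A near-extremal fractional \emph{triangle} value, which is what your Step~1 rounding produces, does not force the structure. For example, $K_n$ is chordal with fractional triangle value $n(n-1)/6$, within $O(n)$ of $M_n$. Yet it has no clique $A$ of order about $n/3$ with $D_A+e(G-A)=o(n^2)$.

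Your closing remark is correct and matches the paper: $\epsilon(n)$ is graph-independent because the packing approximation is the only non-explicit loss. The paper takes $L=4$ there.
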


The proof first localizes a signed fractional optimum at an actual clique. The approximation theorem of Haxell and R\"odl \cite{HR}, in the fixed-family form proved by Yuster \cite{Yuster}, is used only to obtain this localization from a large integral partition number. A separate construction then gives an exact partition near the extremal configuration. Galvin's list edge-colouring theorem \cite{Galvin} supplies one part of that construction. A bounded exceptional set is controlled by an incidence inequality and a finite optimization of its normalized neighbourhood sizes. Equality is obtained from the final finite partition count, rather than from the fractional approximation.

Section~\ref{sec:setup} gives the elimination facts and attaining examples. Section~\ref{sec:fractional} proves fractional localization, its packing conversion, and Theorem~\ref{thm:stabilityintro}. Sections~\ref{sec:construction} and \ref{sec:rigidity} construct the partitions and prove Theorem~\ref{thm:main}. Section~\ref{sec:integer} gives the finite signed theorem. The final section states the all-order integral conjecture and quantitative stability questions.

\section{Elimination and the extremal examples}\label{sec:setup}

All graphs are finite and simple. We write $|G|=|V(G)|$, use $G[U]$ for an induced subgraph, and set $e(U)=e(G[U])$ when the graph is fixed. For disjoint vertex sets $U,V$, the number of edges between them is $e(U,V)$. A clique partition of an edgeless graph has zero parts. All asymptotic statements below are along sequences whose orders tend to infinity; a parameter described as fixed is held constant along the sequence.

For an order $\pi$ of $V(G)$, let $N^+_\pi(v)$ be the neighbours appearing after $v$. The order is \emph{$s$-defective} if
\[
 |N^+_\pi(v)|-\omega(G[N^+_\pi(v)])\le s
 \qquad(v\in V(G)).
\]
An induced suborder has the same property: deleting a vertex from a graph decreases its order by one and its clique number by at most one.

\Needspace{9\baselineskip}
\begin{lemma}\label{lem:rooted}
The following conditions are equivalent for a graph $G$ and an integer $s\ge0$:
\begin{enumerate}[label=\textup{(\roman*)}]
\item $\rsd(G)\le s$;
\item every induced subgraph has an $s$-defective order ending in any prescribed clique;
\item every noncomplete induced subgraph has two nonadjacent $s$-simplicial vertices.
\end{enumerate}
The parameter is induced-hereditary, and every nonempty induced subgraph $H$ is $(\omega(H)+s-1)$-degenerate when $\rsd(G)\le s$.
\end{lemma}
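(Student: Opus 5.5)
We prove (i)$\Rightarrow$(ii)$\Rightarrow$(iii)$\Rightarrow$(i), and then deduce heredity and degeneracy. Two facts do most of the work. First, being $s$-simplicial is monotone under passing to induced subgraphs that contain the vertex. Deleting a neighbour $u$ of $v$ lowers $d(v)$ by one and lowers $\omega$ of the neighbourhood by at most one. Deleting a non-neighbour changes neither quantity. Second, an order is $s$-defective exactly when each vertex $v$ is $s$-simplicial in the subgraph induced by $v$ and the vertices after it.

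\emph{(i)$\Rightarrow$(ii).} Let $H$ be an induced subgraph and $C$ a clique of $H$. We build the order greedily from the front. If $V(H)=C$, any order of $C$ is $0$-defective. Otherwise $C$ is a proper clique of $H$, so (i) gives an $s$-simplicial vertex $v\notin C$ of $H$. We place $v$ first and recurse on $H-v$, which is an induced subgraph of $G$ in which $C$ is still a clique. By the second fact the resulting order is $s$-defective, and it ends in $C$.

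\emph{(ii)$\Rightarrow$(iii).} Let $H$ be noncomplete, with nonadjacent vertices $a,b$. Take an $s$-defective order of $H$ ending in the clique $\{b\}$. Its first vertex $v_1\neq b$ is $s$-simplicial in $H$, since $N^+(v_1)=N_H(v_1)$. If $v_1\not\sim b$, we take the order ending in $\{v_1\}$; its first vertex $w$ is $s$-simplicial and $w\neq v_1$. If $w$ happens to be adjacent to $v_1$, this simple argument fails. The cleaner route is to pick a maximal clique $K$ with $K\neq V(H)$, take an order ending in $K$, and let $v_1\notin K$ be its first vertex. Then $v_1$ has a non-neighbour in $K$, but we still need a second $s$-simplicial vertex nonadjacent to $v_1$.

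We do this by induction on $|H|$, in the style of Dirac. If $N_H(v_1)\cup\{v_1\}\neq V(H)$, apply the construction to $H'=H-N_H[v_1]$, which is nonempty. By (ii) restricted to $H'$, or by induction, $H'$ has an $s$-simplicial vertex. This fails, however, because simpliciality in $H'$ does not lift to $H$. The better approach is to use (ii) with the prescribed clique $N$-free. Let $M=V(H)\setminus N_H[v_1]\neq\emptyset$ and choose a clique $C$ containing $v_1$. An order ending in $\{v_1\}$ has its first vertex $w\neq v_1$ $s$-simplicial in $H$. To force $w\in M$, we argue that some vertex of $M$ precedes all of $N(v_1)$ in an order whose suffix is $\{v_1\}$. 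This is the expected main obstacle.

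The fallback is to prove (ii)$\Rightarrow$(i) directly, which is immediate: the first vertex of the order ending in a prescribed proper clique lies outside it and is $s$-simplicial. Then prove (i)$\Rightarrow$(iii) by induction, Dirac-style. Take $v$ $s$-simplicial outside the empty clique. If $H-N[v]$ is nonempty, pick a maximal clique $K\ni v$ and apply (i) to $H$ with prescribed clique $K$ to get $w\notin K$ that is $s$-simplicial. If $w\sim v$ for every such choice, iterate by choosing $K$ to be a maximal clique containing $v$ and $w$, and use finiteness with the monotonicity fact to eventually find a nonadjacent pair. If $N[v]=V(H)$, then $H-v$ is noncomplete, and induction gives two nonadjacent $s$-simplicial vertices of $H-v$. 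Adding the universal vertex $v$ raises both the degree and the clique number of their neighbourhoods by exactly one, so they remain $s$-simplicial in $H$.

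\emph{(iii)$\Rightarrow$(i).} Let $C$ be a proper clique of $H$. If $H$ is complete, any vertex outside $C$ is $0$-simplicial. Otherwise $H$ has two nonadjacent $s$-simplicial vertices, and they cannot both lie in the clique $C$, so one of them lies outside $C$.

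\emph{Heredity and degeneracy.} Heredity follows from the form of (i), which quantifies over all induced subgraphs. For degeneracy, every nonempty induced subgraph $H'$ of $H$ has an $s$-simplicial vertex $v$. Its degree satisfies $d(v)\le\omega(H'[N(v)])+s\le\omega(H')-1+s$, because the neighbourhood clique together with $v$ is a clique of $H'$. Since $\omega(H')\le\omega(H)$, every nonempty induced subgraph has a vertex of degree at most $\omega(H)+s-1$, which is the required degeneracy.
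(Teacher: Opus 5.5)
Your final route matches the paper's proof. You prove (i)$\Rightarrow$(ii) by greedy deletion outside the root, (ii)$\Rightarrow$(i) via the first exterior vertex, (iii)$\Rightarrow$(i) because a clique contains at most one of two nonadjacent vertices, and degeneracy from the empty root. The one step that is not yet a proof is your fallback (i)$\Rightarrow$(iii). As written, you re-choose $K$ as a maximal clique containing $v$ and the \emph{latest} $w$, and such a $K$ need not contain earlier finds. Since (i) only guarantees \emph{some} $s$-simplicial vertex outside $K$, the procedure can return a previously found vertex; for instance, it can alternate between two adjacent finds $w_1,w_2$. So finiteness alone does not give termination. The monotonicity fact plays no role here either.

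The repair is to keep the root containing every $s$-simplicial vertex found so far. While the found set is a clique, it is a proper clique of the noncomplete graph $H$, so (i) yields a genuinely new $s$-simplicial vertex. The found set therefore grows strictly until it contains a nonadjacent pair. Pushed to its conclusion, this is the paper's one-line argument. If $H$ is noncomplete and the set $S$ of all its $s$-simplicial vertices were a clique, then $S$ would be a proper root, and (i) would give an $s$-simplicial vertex outside $S$, which is absurd. This makes the case distinction on $N[v]=V(H)$ and the induction on $|H|$ unnecessary, though your universal-vertex argument is correct. The abandoned attempts at a direct (ii)$\Rightarrow$(iii) should simply be removed, since the equivalence is already closed through (i).
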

\begin{proof}
Repeated deletion outside the prescribed clique proves (i)$\Rightarrow$(ii). The first exterior vertex of such an order proves the converse. If the set of all $s$-simplicial vertices in a noncomplete induced subgraph were a clique, it would be a proper root containing every such vertex, contradicting (i). This proves (iii). Conversely, a clique contains at most one endpoint of the nonadjacent pair in (iii), and every vertex of a complete graph is simplicial. Thus (iii) implies (i).

Induced heredity is part of (i). Applying it with the empty root gives a vertex of degree at most $s+\omega(H)-1$ in every induced subgraph of $H$, proving the degeneracy bound. In particular,
\begin{equation}\label{eq:chromatic}
 \chi(H)\le\omega(H)+s.\qedhere
\end{equation}
\end{proof}

The identification with chordal graphs follows from Dirac's theorem \cite{Dirac}. In the reverse direction, an induced cycle of length at least four has no simplicial vertex, so it cannot satisfy Lemma~\ref{lem:rooted} with $s=0$.

\begin{lemma}\label{lem:joins}
For every graph $H$,
\[
 \rsd(H)\le |H|-\omega(H),\qquad
 \rsd(K_m\vee H)=\rsd(H)\quad(m\ge0).
\]
\end{lemma}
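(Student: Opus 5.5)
For the first inequality, I plan to use Lemma~\ref{lem:rooted}(i) directly. Put $s=|H|-\omega(H)$ and let $J$ be an induced subgraph of $H$, with $C$ a proper clique of $J$. Choose any vertex $v\in V(J)\setminus C$. Then
\[
d_J(v)-\omega(J[N_J(v)])\le |N_J(v)|\le |J|-1 .
\]
This crude bound is not enough, so I will use the clique number instead. The set $N_J(v)$ lies inside $V(J)$, and $J$ contains a clique of order $\omega(J)\ge \omega(H)-(|H|-|J|)$. If that clique contains $v$, then removing $v$ from it leaves a clique inside $N_J(v)$ of order at least $\omega(J)-1$. In general, every vertex $u$ of $J$ satisfies
\[
|N_J(u)|-\omega(J[N_J(u)])\le (|J|-1)-(\omega(J)-1)=|J|-\omega(J)
\]
whenever $u$ lies in a maximum clique of $J$. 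So I will pick $v$ outside $C$ lying in some maximum clique of $J$. If no such $v$ exists, every maximum clique lies inside $C$, and then $C$ itself is a maximum clique. In that case I take any $v\notin C$. Its neighbourhood omits at least one vertex of $C$, because otherwise $C\cup\{v\}$ would be a larger clique. Hence $|N_J(v)|\le |J|-2$, and the neighbourhood still contains a clique of order at least $\omega(J[N_J(v)])\ge 0$. This gives the bound $|J|-2-\omega(J[N_J(v)])$. I expect this case to be the main obstacle. The cleaner route is to compare directly with $H$: a maximum clique $W$ of $H$ misses at most $s$ vertices in total, so $N_J(v)\cap W$ is a clique missing at most $s$ vertices of $N_J(v)$ as long as $W\subseteq N_J(v)\cup\{v\}$. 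That containment fails in general. The right statement is therefore that $N_J(v)\setminus W$ has size at most $|H|-|W|=s$, and $N_J(v)\cap W$ is a clique. This gives $\omega(J[N_J(v)])\ge |N_J(v)|-s$ for \emph{every} vertex $v$, which removes the need for case analysis entirely.

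For the join identity, I write $G=K_m\vee H$ with clique part $M$. For monotonicity, $H$ is an induced subgraph of $G$, and Lemma~\ref{lem:rooted} says the parameter is induced-hereditary, so $\rsd(H)\le\rsd(G)$. For the converse, let $s=\rsd(H)$, and take an induced subgraph $J=K_{M'}\vee H'$ of $G$ with $M'\subseteq M$, $H'$ an induced subgraph of $H$, and a proper clique $C$ of $J$. Write $C=C_M\cup C_H$.

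If $C_H$ is a proper clique of $H'$, then condition (i) applied to $H$ gives an $s$-simplicial vertex $v\in V(H')\setminus C_H$ of $H'$. In $J$, the vertex $v$ gains all of $M'$ as neighbours, and both $d$ and $\omega$ of its neighbourhood rise by exactly $|M'|$, since $M'$ is complete to the rest. So $v$ is $s$-simplicial in $J$ and lies outside $C$. Here $H'$ may be empty, in which case there is nothing to do.

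Otherwise $C_H=V(H')$, so $H'$ is complete. Then $J$ is complete, and since $C$ is proper some vertex of $J$ lies outside $C$. That vertex is $0$-simplicial because $J$ is complete.
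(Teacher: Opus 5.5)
Your proof is correct and essentially follows the paper. The observation that $N_J(v)\setminus W\subseteq V(H)\setminus W$ for a maximum clique $W$ of $H$ is essentially the paper's argument for the first inequality, and the abandoned case analysis before it should simply be deleted. For the join, you verify condition (i) directly rather than building the $s$-defective order of (ii), but this rests on the same fact the paper uses: the complete side $M'$ raises both the degree and the clique number of each neighbourhood by $|M'|$. The only slip is cosmetic: an empty $H'$ has $C_H=V(H')$, so it falls under your second (complete) case, not the first.
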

\begin{proof}
A maximum clique leaves $|H|-\omega(H)$ vertices outside it. Its intersection with any neighbourhood is a clique, so every vertex has neighbourhood deficiency at most that number. Clique deficiency does not increase on taking induced subgraphs, proving the first assertion.

For the second assertion, induced heredity gives the lower bound. In an induced subgraph of the join, eliminate the $H$-vertices outside the root using a defective order ending in the $H$-part of the root, while retaining all complete-side vertices. The retained complete-side vertices increase the order and clique number of each forward neighbourhood by the same amount. The remaining graph is complete, and can be finished outside the root and then inside it.
\end{proof}

The following obstruction will replace common-neighbourhood arguments specific to chordal graphs.

\begin{lemma}\label{lem:forbidden}
A graph of rooted simplicial defect at most $s$ cannot contain $s+1$ vertex-disjoint nonedges whose endpoints have $s+2$ independent common neighbours.
\end{lemma}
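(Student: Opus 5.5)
The plan is to exhibit a single induced subgraph with no $s$-simplicial vertex at all, which contradicts the definition of $\rsd(G)\le s$ applied with the empty root (equivalently, condition (iii) of Lemma~\ref{lem:rooted}). Suppose $G$ contains vertex-disjoint nonedges $x_1y_1,\dots,x_{s+1}y_{s+1}$. Put $X=\{x_1,y_1,\dots,x_{s+1},y_{s+1}\}$, so $|X|=2s+2$. Suppose also that $W$ is an independent set of $s+2$ vertices, each adjacent to every vertex of $X$. Common neighbours are automatically outside $X$, since each endpoint is nonadjacent to its partner, so $W\cap X=\emptyset$. Let $H=G[X\cup W]$. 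The key observation is that any clique of $H$ meets each pair $\{x_i,y_i\}$ in at most one vertex and meets $W$ in at most one vertex.

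First consider a vertex $w\in W$. Because $W$ is independent, $N_H(w)=X$ exactly, so $d_H(w)=2s+2$. A clique inside $X$ uses at most one vertex from each of the $s+1$ pairs, so $\omega(H[N_H(w)])\le s+1$. Hence the deficiency of $w$ is at least $s+1$, and $w$ is not $s$-simplicial.

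Next consider an endpoint, say $x_i$. Its neighbourhood contains all of $W$ together with some set $A\subseteq X\setminus\{x_i,y_i\}$ of size $a$; it does not contain $y_i$. A clique in $N_H(x_i)$ has at most one vertex from $W$. Its vertices from $A$ lie in distinct pairs other than the $i$-th, so there are at most $\min(a,s)$ of them. The deficiency is therefore at least
\[
(s+2+a)-1-\min(a,s)\ge s+1 .
\]
So $x_i$ is not $s$-simplicial either.

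Thus $H$ is an induced subgraph with no $s$-simplicial vertex, contradicting $\rsd(G)\le s$ with the empty root. There is no real obstacle in this argument. The only points needing care are that the count $s+2$ for $W$ is exactly what makes the endpoint case work (it overcomes the single $W$-vertex a clique can use), and that the edges inside $X$ between different pairs are handled by the $\min(a,s)$ bound.
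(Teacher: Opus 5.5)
Your proof is correct and is essentially the paper's argument: both use the induced subgraph on the $2s+2$ endpoints together with the $s+2$ independent common neighbours. A common neighbour fails to be $s$-simplicial because a clique meets each nonedge in at most one vertex, and an endpoint fails because the independent set contributes $s+2$ neighbours but at most one clique vertex. The only difference is the root: the paper takes a maximum clique of the endpoint graph, whereas you take the empty root. This works because the endpoint estimate holds for every endpoint, not only those outside the root. Your version in fact shows slightly more, namely that the configuration is excluded as soon as every induced subgraph has some $s$-simplicial vertex.
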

\begin{proof}
Let $W$ be the $2s+2$ endpoints and let $P$ be the independent common-neighbour set of size $s+2$. Every clique in $G[W]$ omits an endpoint of each nonedge, so $|W|-\omega(G[W])\ge s+1$. In $G[W\cup P]$, retain a maximum clique of $G[W]$ as root. Each vertex of $P$ has deficiency at least $s+1$. At a vertex $x\in W$ outside the root, adjoining the independent set $P$ to $N_{G[W]}(x)$ increases the neighbourhood order by $|P|$ and its clique number by one. Its deficiency is therefore at least $|P|-1=s+1$. This contradicts the rooted condition.
\end{proof}

We use the following elementary edge-colouring facts.

\begin{lemma}\label{lem:colouring}
A simple $d$-degenerate graph of maximum degree $\Delta$ has a proper edge-colouring with at most $\Delta+d$ colours. If a graph has a proper $k$-edge-colouring, it has one in which the colour classes differ in size by at most one.
\end{lemma}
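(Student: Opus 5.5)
The first assertion is a greedy argument along a degeneracy order. Fix an order $v_1,\dots,v_m$ of $E(G)$'s host graph in which each vertex $v_i$ has at most $d$ neighbours among $v_1,\dots,v_{i-1}$; such an order exists because $G$ is $d$-degenerate, by repeatedly removing a vertex of degree at most $d$ and reversing the removal sequence. We colour the edges from the palette $\{1,\dots,\Delta+d\}$ in the following order: process the vertices $v_i$ one at a time, and when $v_i$ is processed, colour the edges $v_iv_j$ with $j<i$, that is, the backward edges of $v_i$.

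Consider the moment an edge $v_iv_j$ with $j<i$ is coloured. The edges at $v_j$ that already carry colours number at most $\Delta-1$, excluding $v_iv_j$ itself. The edges at $v_i$ that already carry colours are only backward edges of $v_i$ coloured earlier in the same step, since forward edges of $v_i$ are coloured only when a later vertex is processed. There are at most $d-1$ of these. So at most $\Delta+d-2$ colours are forbidden, and a colour from the palette of size $\Delta+d$ is free. This gives a proper colouring with $\Delta+d$ colours. (The bound $\Delta+d-1$ would also follow, but it is not needed.)

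The second assertion is the standard equalization argument, done by a potential-function reduction. Take a proper $k$-edge-colouring, and suppose two colour classes $M_a$ and $M_b$ satisfy $|M_a|\ge|M_b|+2$. The union $M_a\cup M_b$ has maximum degree at most two and alternates colours, so its components are paths and even cycles. Cycles and even paths contain equally many edges of each colour. Hence some component is a path with one more $a$-edge than $b$-edges. Swapping the two colours on that path keeps the colouring proper, because the endpoints of the path miss the relevant colour. The swap decreases $|M_a|$ by one and increases $|M_b|$ by one.

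The quantity $\sum_c|M_c|^2$ is a nonnegative integer, and each swap strictly decreases it, since $(x-1)^2+(y+1)^2<x^2+y^2$ when $x\ge y+2$. So the process terminates, and it can only stop once all classes differ in size by at most one. The only point needing care is the properness of the Kempe swap at the path endpoints, which is routine.
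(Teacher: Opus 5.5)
Your proof is correct and follows the paper's argument. The greedy colouring along a degeneracy order is the unrolled form of the paper's induction (delete a vertex of degree at most $d$, colour the rest, restore its edges with at most $(\Delta-1)+(d-1)$ forbidden colours), and your Kempe-path swap with the potential $\sum_c|M_c|^2$ is the paper's argument run as a descent rather than by choosing a minimizing colouring; since your classes are indexed by all $k$ colours, empty classes are included, as the later bound $\lceil m/k\rceil$ on each class requires.
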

\begin{proof}
For the first assertion, remove a vertex of degree at most $d$, colour the residual by induction from the same palette, and restore its edges successively. At a restored edge, at most $(\Delta-1)+(d-1)$ colours are forbidden. The case of no edges is immediate.

For the second assertion, choose a proper colouring minimizing the sum of the squared colour-class sizes, allowing empty classes. If two classes differ in size by at least two, their alternating union has a path component with one more edge of the larger colour. Interchanging the colours on that component decreases the sum of squares, a contradiction.
\end{proof}

To record the attaining graphs, define
\[
 B_n(c)=c(n-c)-\binom c2,
 \qquad M_n=\frac{(2n+1)^2}{24}.
\]
The square completion
\begin{equation}\label{eq:quadratic}
 c(n-c)-\binom{c-s}{2}
 =M_{n+s}-\binom{s+1}{2}
 -\frac32\left(c-\frac{2(n+s)+1}{6}\right)^2
\end{equation}
has maximum $Q_s(n)$ over integer $c$. Indeed, the nearest-integer loss from $M_N$ is $1/24$ when $N\equiv0,2\pmod3$, and $3/8$ when $N\equiv1\pmod3$; in both cases the resulting value is $F_N$. For sufficiently large $n$ with $s$ fixed, every maximizing $c$ satisfies $s+1\le c\le n/2$.

\begin{lemma}\label{lem:attainment}
Let $k\ge s+1$ and $b\ge k$. The graph
\[
 G=(K_{k-s}\sqcup\ind_s)\vee\ind_b
\]
has $\rsd(G)=s$ and
\begin{equation}\label{eq:attainingcost}
 \cp(G)=kb-\binom{k-s}{2}.
\end{equation}
\end{lemma}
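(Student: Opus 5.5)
The plan has three parts: compute $\rsd(G)$ directly from the structure of the cliques, get the lower bound on $\cp(G)$ from a signed counting argument, and get the matching upper bound from an explicit partition built on an edge-colouring of the clique part.

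\emph{Notation and cliques.} Write $A$ for the vertex set of $K_{k-s}$, $S$ for that of $\ind_s$, and $B$ for that of $\ind_b$. The edges are the $\binom{k-s}{2}$ edges inside $A$, the $(k-s)b$ edges between $A$ and $B$, and the $sb$ edges between $S$ and $B$. Since $B$ and $S$ are independent and there are no $A$--$S$ edges, every clique of $G$ is of one of two forms. Either it is a subset of $A$ together with at most one vertex of $B$, or it is contained in a single edge $xy$ with $x\in S$ and $y\in B$.

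\emph{The defect.} For $\rsd(G)\le s$, fix an induced subgraph $H$ and a proper clique $C$ of $H$.
\begin{itemize}
\item If some $y\in B\cap V(H)$ lies outside $C$, then $N_H(y)=(A\cup S)\cap V(H)$. This neighbourhood contains the clique $A\cap V(H)$ (or the empty clique), so the deficiency of $y$ is at most $|S\cap V(H)|\le s$.
\item Otherwise $B\cap V(H)\subseteq C$, so $|B\cap V(H)|\le1$. Any vertex of $H$ outside $C$ lies in $A$ or $S$. A vertex of $S$ then has at most one neighbour, and a vertex of $A$ has neighbourhood $(A\cap V(H))\setminus\{v\}$ plus at most one vertex of $B$, which is a clique. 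In both cases the vertex is $0$-simplicial.
\end{itemize}
For $\rsd(G)\ge s$ (nontrivial only when $s\ge1$), choose $a\in A$ and a set $P\subseteq B$ of size $s+1$, which exists because $b\ge k\ge s+1$. Let $H=G[S\cup\{a\}\cup P]$.
\begin{itemize}
\item A vertex of $S$ and the vertex $a$ each have neighbourhood $P$. This set is independent, so the deficiency is $s$.
\item A vertex of $P$ has neighbourhood $S\cup\{a\}$. This set is independent because there are no $A$--$S$ edges, so the deficiency is again $s$.
\end{itemize}
Thus $H$ has no $(s-1)$-simplicial vertex even with the empty root, and therefore $\rsd(G)\ge s$.

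\emph{Lower bound on $\cp$.} Take any clique partition of $G$.
\begin{itemize}
\item An edge $xy$ with $x\in S$ and $y\in B$ lies in no larger clique, so these $sb$ edges are singleton parts.
\item Every other part $Q$ satisfies $Q\subseteq A\cup\{y\}$ for some $y\in B$, or $Q\subseteq A$. Let $a=|Q\cap A|$. The number of $A$--$B$ edges of $Q$ minus the number of $A$-edges of $Q$ is at most $a-\binom a2\le1$. Equality requires $a\le2$; if $Q\subseteq A$ the quantity is nonpositive.
\end{itemize}
Summing this quantity over the remaining parts gives $(k-s)b-\binom{k-s}2$. Hence these parts number at least $(k-s)b-\binom{k-s}2$, and
\[
\cp(G)\ge sb+(k-s)b-\binom{k-s}{2}=kb-\binom{k-s}{2}.
\]

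\emph{Upper bound on $\cp$.} Properly edge-colour $K_{k-s}$ with at most $k-s\le b$ colours, and identify the colours with distinct vertices of $B$. For each $A$-edge $aa'$ with colour $y$, take the triangle $\{a,a',y\}$. Because each colour class is a matching, these triangles are edge-disjoint. Cover all remaining edges by single-edge parts. The count is
\[
sb+\binom{k-s}{2}+\Bigl((k-s)b-2\binom{k-s}{2}\Bigr)=kb-\binom{k-s}{2},
\]
which matches the lower bound and proves \eqref{eq:attainingcost}.

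The step that needs the most care is the lower bound $\rsd(G)\ge s$. The choice of $H$ must use all of $S$, one vertex of $A$, and $s+1$ vertices of $B$, so that every vertex, including the vertices of $P$ (whose neighbourhood is independent because $A$ and $S$ are nonadjacent), has deficiency exactly $s$. The remaining steps are direct counting.
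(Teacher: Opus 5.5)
Your proof is correct and follows the paper's argument. The cost bound is the same spoke-versus-core-edge count ($j\le 1+\binom j2$ per part), matched by triangles built from a proper edge-colouring of $K_{k-s}$ with colours assigned to distinct pages, and your case check for $\rsd(G)\le s$ is the paper's ``delete the non-root pages first'' elimination written out; the only variation is your witness for $\rsd(G)\ge s$, the induced $K_{s+1,s+1}$ with the empty root, where the paper instead takes $G$ itself with the $(k-s)$-clique as root (pages have deficiency $s$, vertices of $\ind_s$ have deficiency $b-1\ge s$), and both work.
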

\begin{proof}
Call the $b$ independent vertices pages and the other $k$ vertices the core. With any prescribed clique retained, first delete the non-root pages. Their forward neighbourhoods have clique deficiency at most $s$. The remaining graph is a clique with an independent set joined through at most one retained page; it is chordal and can be eliminated outside the root. This proves $\rsd(G)\le s$. For the reverse inequality, retain the $(k-s)$-clique as root. Every page has deficiency $s$, and every exceptional core vertex has deficiency $b-1\ge s$.

A part containing a page and $j$ core vertices covers $j$ spokes and consumes $\binom j2$ core edges. Summing $j\le1+\binom j2$ gives $\cp(G)\ge kb-\binom{k-s}{2}$. Label the forced core clique by residues modulo $k-s$, and colour its edge $ij$ by $i+j$. Incident edges have different colours. Assign the colours to distinct pages, extend core edges to triangles through their assigned pages, and take all unused spokes singly. This attains the lower bound. The clique of order one has no edges to colour.
\end{proof}

\section{Signed fractional localization}\label{sec:fractional}

Fix an integer $L\ge3$. Let $\mathcal K_L(G)$ be the cliques of $G$ of orders from two to $L$. Write $q_L(G)$ for the minimum size of an edge partition using these cliques, and let
\begin{equation}\label{eq:LP}
 q_L^*(G)=\min\left\{\sum_{Q\in\mathcal K_L(G)}x_Q:
 x_Q\ge0,\quad \sum_{Q\ni e}x_Q=1\ (e\in E(G))\right\}.
\end{equation}
The notation $Q\ni e$ means that $e\in E(Q)$. The dual program is
\begin{equation}\label{eq:dual}
 q_L^*(G)=\max\left\{\sum_{e\in E(G)}z_e:
 \sum_{e\in E(Q)}z_e\le1\ (Q\in\mathcal K_L(G))\right\}.
\end{equation}
The coordinates $z_e$ are real and unrestricted in sign. The edge constraints imply $z_e\le1$. The primal is feasible using edges alone. An optimal dual exists; alternatively, coordinates below $2-\binom L2$ may be raised to that value without violating any clique constraint, so the dual maximum can be taken over a compact set.

\subsection{A maximal positive clique-star}

For an edge set $S$, define
\[
 Z(S)=\sum_{e\in S}z_e,
 \qquad N_z(S)=\sum_{e\in S}(-z_e)_+.
\]
The next estimate holds for every feasible dual vector, not only an optimum.

\begin{lemma}\label{lem:localize}
Let $n\ge8$, $L\ge4$, $\rsd(G)\le s$, and let $z$ be feasible in \eqref{eq:dual}. Put $W=Z(E(G))$. There are a vertex $v$ and a clique $C\subseteq N(v)$, possibly empty, such that, with
\[
 c=|C|,\quad \alpha=\sum_{u\in C}z_{vu},\quad
 N_{\rm out}=N_z(E(G)\setminus E(G[C])),
\]
we have $0\le\alpha\le c$ and
\begin{equation}\label{eq:rootineq}
 W+N_{\rm out}\le(n-c)(\alpha+s)+Z(E(G[C])).
\end{equation}
If $c\le(n+1)/2$, then
\begin{align}\label{eq:localdeficit}
 M_n+sn-W\ \ge\ &\frac32\left(c-\frac{2n+1}{6}\right)^2
 +(n-2c+1)(c-\alpha)+sc+N_{\rm out}.
\end{align}
If $c\ge(n+1)/2$, then
\begin{equation}\label{eq:largecore}
 W+N_{\rm out}\le\frac{25n^2}{192}+sn.
\end{equation}
In particular, $q_L^*(G)\le M_n+n\rsd(G)$.
\end{lemma}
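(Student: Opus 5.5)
Take $C$ to be a clique-star of largest star weight, bound each vertex outside $C$ by that maximality together with the defect, and bound the edges inside $C$ by averaging clique constraints.

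\emph{Choice of the star and proof of \eqref{eq:rootineq}.} Among all pairs $(v,C)$ with $C\subseteq N(v)$ a clique, possibly empty, choose one maximizing $\alpha=\sum_{u\in C}z_{vu}$. The empty clique gives $\alpha\ge0$, and the edge constraints $z_e\le1$ give $\alpha\le c$.

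By Lemma~\ref{lem:rooted}(ii), take an $s$-defective order of $V(G)$ ending in $C$. Every edge not inside $C$ is a forward edge of a unique vertex $x\notin C$. For such an edge, $z_e+(-z_e)_+=(z_e)_+$. Hence $W+N_{\rm out}-Z(E(G[C]))$ is the sum, over $x\notin C$, of the positive forward weights of $x$.

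Fix $x\notin C$ and let $P\subseteq N^+_\pi(x)$ be the forward neighbours joined to $x$ by an edge of positive weight. Clique deficiency does not increase on passing to induced subgraphs, so $|P|-\omega(G[P])\le s$. Let $Q$ be a maximum clique of $G[P]$. Then $Q\subseteq N(x)$ is a clique, so maximality gives $\sum_{u\in Q}z_{xu}\le\alpha$. The at most $s$ remaining vertices of $P$ contribute at most $1$ each. So every $x\notin C$ contributes at most $\alpha+s$, and summing over the $n-c$ such vertices gives \eqref{eq:rootineq}.

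\emph{Proof of \eqref{eq:localdeficit}.} Bound $Z(E(G[C]))$ using the triangles $\{v,a,b\}$ with $a,b\in C$; these are cliques of order $3\le L$. Summing their constraints over all pairs gives
\[
 (c-1)\alpha+Z(E(G[C]))\le\binom c2 .
\]
Substituting into \eqref{eq:rootineq} yields
\[
 W+N_{\rm out}\le(n-2c+1)\alpha+\binom c2+(n-c)s .
\]
Write $\alpha=c-(c-\alpha)$. A direct computation gives $c(n-2c+1)+\binom c2=B_n(c)$. The identity \eqref{eq:quadratic} with $s=0$ gives $B_n(c)=M_n-\tfrac32\bigl(c-\tfrac{2n+1}6\bigr)^2$. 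Rearranging produces exactly \eqref{eq:localdeficit}; this step is an identity and needs no sign condition. The hypothesis $c\le(n+1)/2$ only makes the term $(n-2c+1)(c-\alpha)$ nonnegative, which is what the later argument uses.

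\emph{Proof of \eqref{eq:largecore}.} This is the main obstacle, because the triangle bound alone gives about $c^2/2$ when $\alpha$ is small and $c$ is large. Here I also use cliques of order four, which are allowed since $L\ge4$. Averaging the constraints of all $4$-subsets of $C$ gives $Z(E(G[C]))\le\binom c2/6$; this needs $c\ge4$, which holds because $c\ge(n+1)/2$ and $n\ge8$.

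Take the convex combination of the triangle bound and this $K_4$ bound with weights $\lambda=(n-c)/c\in[0,1]$ and $1-\lambda$. This choice cancels the coefficient of $\alpha$ in \eqref{eq:rootineq}, leaving at most
\[
 \frac{5nc}{12}-\frac{c^2}{3}+sn .
\]
This is maximized at $c=5n/8$, with value $25n^2/192+sn$.

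\emph{The final inequality.} Apply the lemma to an optimal dual vector, so that $W=q_L^*(G)$. Since $N_{\rm out}\ge0$ and every term on the right of \eqref{eq:localdeficit} is nonnegative for $c\le(n+1)/2$, that case gives $W\le M_n+sn$. In the other case, $25n^2/192<n^2/6\le M_n$ gives the same bound. Taking $s=\rsd(G)$ yields $q_L^*(G)\le M_n+n\rsd(G)$.
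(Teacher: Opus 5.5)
Your proof is correct and follows the paper's route: the same maximal clique-star, the same count of positive forward weight along an $s$-defective order ending in $C$, the triangle-star bound, and the $K_4$-averaging bound for $c\ge(n+1)/2$, where your convex combination is the paper's evaluation of the two linear bounds at their crossing point $\alpha=5c/12$. There is one arithmetic slip: cancelling the coefficient $(n-c)$ of $\alpha$ against $-(c-1)\alpha$ requires $\lambda=(n-c)/(c-1)$, not $(n-c)/c$. Your weight leaves a residual $\frac{n-c}{c}\alpha$, which for $s=0$ can add about $n/6$ beyond the claimed bound. The correct $\lambda$ lies in $[0,1]$ exactly because $c\ge(n+1)/2$, and it gives $\frac{c(5n-4c-1)}{12}+(n-c)s\le\frac{5nc}{12}-\frac{c^2}{3}+sn$, as you state.
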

\begin{proof}
Maximize $\sum_{u\in C}z_{vu}$ over vertices $v$ and cliques $C\subseteq N(v)$, allowing the empty clique. Remove any nonpositive entries from a maximizing clique. Thus its entries are positive unless $C$ is empty.

Take an $s$-defective order ending in $C$. At each exterior vertex the forward neighbourhood is a clique together with at most $s$ exceptional vertices. The sum of its positive weights on the clique is at most $\alpha$, since the positive neighbours still form a clique. Its positive exceptional weights sum to at most $s$. Every edge outside $E(G[C])$ is counted at its earlier endpoint. Adding negative mass to the signed sum leaves precisely the positive sum, and proves \eqref{eq:rootineq}.

Sum the triangle inequalities through the chosen vertex $v$ to obtain
\begin{equation}\label{eq:trianglestar}
 Z(E(G[C]))\le\binom c2-(c-1)\alpha.
\end{equation}
Writing $B_n(c)=c(n-c)-\binom c2$, equations \eqref{eq:rootineq}--\eqref{eq:trianglestar} give
\[
 W+N_{\rm out}
 \le B_n(c)-(n-2c+1)(c-\alpha)+s(n-c).
\]
The square completion \eqref{eq:quadratic} with $s=0$ proves \eqref{eq:localdeficit}.

In the other range $c\ge(n+1)/2$, we have $c\ge4$. Averaging all four-clique constraints inside $C$ yields
\[
 Z(E(G[C]))\le\frac16\binom c2.
\]
After subtracting $s(n-c)$, the right side of \eqref{eq:rootineq} is bounded by the minimum of
\[
 (n-2c+1)\alpha+\binom c2
 \quad\hbox{and}\quad
 (n-c)\alpha+\frac16\binom c2.
\]
The first is nonincreasing in $\alpha$, the second is nondecreasing, and they meet at $\alpha=5c/12$. Their minimum is therefore at most
\[
 \frac{c(5n-4c-1)}{12}
 \le\frac{(5n-1)^2}{192}\le\frac{25n^2}{192}.
\]
This proves \eqref{eq:largecore}. Both ranges imply the final assertion.
\end{proof}

\subsection{A cover estimate with signed weights}

A cover suffices for the next bound, provided repeated negative weights are counted. This avoids any assertion about an edge-deleted graph retaining its elimination property.

\begin{lemma}\label{lem:signedcover}
Let $H$ have at most $n$ vertices and admit an $s$-defective order. Suppose $z$ satisfies all clique inequalities of orders two through $L$. For every $\xi>0$,
\begin{equation}\label{eq:coverbound}
 Z(E(H))\le
 \left(\xi+\frac1{2(L-1)}\right)n^2+(s+3)n
 +\frac{L-2}{\xi}N_z(E(H)).
\end{equation}
\end{lemma}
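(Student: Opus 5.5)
The plan is to bound $Z(E(H))$ vertex by vertex along the $s$-defective order $\pi$. Every edge is charged to its earlier endpoint $v$. By the definition of an $s$-defective order, $N^+_\pi(v)=K_v\cup X_v$, where $K_v$ is a clique and $|X_v|\le s$. The edges from $v$ to $X_v$ contribute at most $s$ per vertex, since $z_e\le1$ by the edge constraints. This gives at most $sn$ in total. The remaining task is to bound the star weights $\sigma_v=\sum_{u\in K_v}z_{vu}$.

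Fix a threshold $t=\lceil\xi n\rceil+1$ and call $v$ \emph{small} if $|K_v|<t$. For a small vertex, I will use the trivial bound $\sigma_v\le|K_v|\le\xi n+2$. Summed over all vertices, this contributes at most $\xi n^2+2n$.

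For a large vertex $v$, partition $K_v$ into $\lceil|K_v|/(L-1)\rceil$ blocks of size at most $L-1$. For each block $P$, the set $\{v\}\cup P$ is a clique of order at most $L$. Its inequality gives
\[
\sum_{u\in P}z_{vu}\le 1-Z(E(G[P]))\le 1+N_z(E(G[P])),
\]
where the positive internal weights are simply discarded. Hence $\sigma_v\le |K_v|/(L-1)+1+\sum_P N_z(E(G[P]))$. Since $\sum_v|K_v|\le e(H)\le n^2/2$, the block counts contribute at most $n^2/(2(L-1))+n$.

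The main obstacle is the repeated negative mass. A single edge $e=xy$ may lie inside $K_v$ for up to $n$ different vertices $v$, so charging $N_z$ once per occurrence would be fatal. I plan to handle this by averaging: for each large $v$, choose the partition of $K_v$ uniformly at random among partitions into blocks of size $L-1$ (with one remainder block). For a fixed edge $xy\subseteq K_v$, the probability that $x$ and $y$ land in the same block is at most $(L-2)/(|K_v|-1)$. For large $v$ this is at most $(L-2)/(\xi n)$. Summing over the at most $n$ vertices $v$ with $e\subseteq K_v$, the expected total negative charge is at most
\[
\frac{L-2}{\xi}\,N_z(E(H)).
\]
Some choice of partitions achieves at most the expectation. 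The only careful step is the same-block probability when a remainder block is present; I expect it still obeys the stated bound, or can be arranged to by padding the clique conceptually.

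Adding the contributions gives
\[
Z(E(H))\le\xi n^2+\frac{n^2}{2(L-1)}+(s+3)n+\frac{L-2}{\xi}N_z(E(H)).
\]
The linear terms are $sn$ from the exceptional vertices, $2n$ from the small vertices, and $n$ from block rounding, which accounts for the $(s+3)n$. This is \eqref{eq:coverbound}.
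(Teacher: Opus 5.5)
Your proof is correct and follows the paper's argument essentially step for step: you charge each edge to its earlier endpoint, bound the exceptional and small-clique forward edges by the edge inequalities, split each large forward clique into random consecutive groups of at most $L-1$ vertices, and average the repeated negative mass over the at most $n$ earlier vertices (the paper phrases this last step through cover multiplicities $\ell_e$ rather than per-block inequalities, which is the same bookkeeping). Your hedge about the remainder block is unnecessary: conditioning on the position of $x$ makes $y$ uniform over the other $|K_v|-1$ positions, and at most $L-2$ of those lie in the block of $x$.
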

\begin{proof}
At each vertex, cover its at most $s$ exceptional forward edges singly. Let $U$ be the clique part of its forward neighbourhood. If $|U|<\lceil\xi n\rceil+1$, cover those forward edges singly. Otherwise order $U$ uniformly at random, split it into consecutive groups of at most $L-1$ vertices, and adjoin the current vertex to each group. All resulting parts are cliques of orders at most $L$.

This covers each edge once at its earlier endpoint, possibly also at earlier vertices. The number of parts is at most
\[
 sn+n\lceil\xi n\rceil+\frac{e(H)}{L-1}+n
 \le\left(\xi+\frac1{2(L-1)}\right)n^2+(s+3)n.
\]
A fixed pair in a large $U$ lies in one group with probability at most
$(L-2)/(|U|-1)\le(L-2)/(\xi n)$. There are at most $n$ possible earlier vertices, so each edge has expected additional multiplicity at most $(L-2)/\xi$.

If $\ell_e\ge1$ is the multiplicity of an edge in this cover, summing its clique inequalities gives
\[
 \sum_e z_e
 \le\#\{\text{cover members}\}+\sum_e(\ell_e-1)(-z_e)_+.
\]
Taking expectations proves \eqref{eq:coverbound}.
\end{proof}

\begin{theorem}\label{thm:fracstability}
For every $\varepsilon>0$ there are an integer $L\ge4$, constants $\delta_0,\eta_0>0$, and an integer $n_0$ such that the following holds. If $|G|=n\ge n_0$,
\[
 \rsd(G)\le\delta_0 n,\qquad
 q_L^*(G)\ge(1/6-\eta_0)n^2,
\]
then $G$ has a clique $C$ for which
\[
 \bigl||C|-n/3\bigr|\le\varepsilon n,
 \qquad D_C+e(G-C)\le\varepsilon n^2.
\]
\end{theorem}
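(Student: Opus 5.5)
We take an optimal dual vector $z$ in \eqref{eq:dual}, so $W=Z(E(G))=q_L^*(G)\ge(1/6-\eta_0)n^2$, and apply Lemma~\ref{lem:localize}. It yields a vertex $v$ and a clique $C\subseteq N(v)$ with parameters $c,\alpha,N_{\rm out}$. Since $s\le\delta_0 n$, we have $M_n+sn-W\le(\eta_0+\delta_0)n^2+O(n)$. The range $c\ge(n+1)/2$ is excluded: there \eqref{eq:largecore} gives $W\le 25n^2/192+\delta_0n^2<(1/6-\eta_0)n^2$ once $\eta_0,\delta_0$ are small, because $25/192<1/6$. So $c\le(n+1)/2$, and every term on the right of \eqref{eq:localdeficit} is nonnegative and at most $\gamma n^2$, where $\gamma=\eta_0+\delta_0+o(1)$. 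In particular:
\begin{enumerate}[label=\textup{(\alpha\arabic*)}]
\item $|c-n/3|=O(\sqrt\gamma)\,n$;
\item $(n-2c+1)(c-\alpha)\le\gamma n^2$, and since $n-2c\gtrsim n/3$, this gives $c-\alpha=O(\gamma n)$;
\item $N_{\rm out}\le\gamma n^2$.
\end{enumerate}
The candidate clique is this $C$.

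We then need to bound $D_C$ and $e(G-C)$. We would first revisit \eqref{eq:rootineq} with these facts. The left side is at least $(1/6-\eta_0)n^2$. The right side is $(n-c)(\alpha+s)+Z(E(G[C]))$, and by \eqref{eq:trianglestar} it is essentially $c(n-c)-\binom c2\approx n^2/6$. So every inequality used along the way is tight up to $O(\gamma)n^2$. The inequality is assembled by summing, over exterior vertices $u$, the positive forward weight, which is at most $\alpha+s$. The total slack $\sum_u(\alpha+s-\text{forward positive weight})$ is therefore $O(\gamma n^2)$.

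To use that slack, we would bound each forward neighbourhood's weight by $z$-weight on a clique of forward neighbours. Forward neighbours in $C$ carry weight at most $\sum_{w\in C\cap N^+(u)}z_{uw}$. A further triangle-type estimate would relate these weights to the deficiency of $u$ towards $C$. The intended conclusion is that most exterior vertices send nearly $\alpha\approx c$ positive weight into edges to $C$. Since $z_e\le1$, such a vertex must be adjacent to all but $O(\cdot)$ vertices of $C$. This gives $D_C=O(\sqrt\gamma)n^2$ after discarding a small exceptional set of vertices.

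For $e(G-C)$ we would use Lemma~\ref{lem:signedcover} rather than a pure counting argument. Apply it to $H=G-C$ with its induced $s$-defective order, choosing $\xi$ small and $L$ large. This gives
\[
Z(E(G-C))\le\bigl(\xi+1/(2(L-1))\bigr)n^2+\frac{L-2}{\xi}\,\gamma n^2+O(sn),
\]
using $N_z(E(G-C))\le N_{\rm out}\le\gamma n^2$. Thus the edges of $G-C$ carry negligible total weight. On the other hand, the positive forward weights counted at exterior vertices essentially exhaust $W$, with slack $O(\gamma n^2)$, so edges inside $G-C$ must have weights near zero in aggregate.

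To convert this into few edges, we would compare with the unweighted fact that $z$ can be modified. Consider $z'$ obtained by raising weights on $G-C$ edges toward value $1/3$ where triangles permit, and check dual feasibility. The alternative is to observe that $q_L^*(G)\ge q_L^*(G[C\cup\text{rest}])$ combined with a bound of the form $q_L^*\le n^2/6-\Omega(e(G-C))$ for near-extremal structures.

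The order of quantifiers is as follows. Given $\varepsilon$, choose $\xi$, then $L$, then $\eta_0$ and $\delta_0$ small relative to $\xi/L$, then $n_0$.

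The main obstacle is exactly this last step: turning \emph{small weight} on $E(G-C)$ and tightness of \eqref{eq:rootineq} into \emph{few edges}. Negative weights are controlled only through $N_{\rm out}$, and edges with weight near zero are not directly counted. We would first try a stability version of the cover argument. A dense $G-C$ would allow many of the groups in Lemma~\ref{lem:signedcover} to be genuine $L$-cliques, each costing $1$ while covering $\binom L2$ edges. Feeding this into a primal solution for $G$ would give $q_L^*(G)\le n^2/6-\Omega(e(G-C))+O(\gamma n^2)$, forcing $e(G-C)\le\varepsilon n^2$.
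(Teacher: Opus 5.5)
Your opening agrees with the paper. That covers the optimal dual, Lemma~\ref{lem:localize}, the exclusion of $c\ge(n+1)/2$, the estimates on $c$, $c-\alpha$ and $N_{\rm out}$, Lemma~\ref{lem:signedcover} on $G-C$ with $N_z(E(G-C))\le N_{\rm out}$, and the order $\xi$, $L$, then $\eta_0,\delta_0$, then $n_0$. But the step you flag as the main obstacle is left open, and it is the heart of the theorem. A small value of $Z(E(G-C))$ says nothing about $e(G-C)$, since row edges may carry weight zero or cancel one another. Neither proposed fix is carried out. Raising row weights toward $1/3$ is forbidden exactly where it matters: a triangle $xyu$ with $u\in C$ and both spokes of weight near $1$ forces $z_{xy}$ near $-1$. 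The primal ``stability cover'' would need a fractional partition of all of $G$ of size $n^2/6-\Omega(e(G-C))$, and you do not construct one.

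The missing idea is to use that triangle constraint directly.

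\emph{Bounding $D_C$.} For $x\in R=V(G)\setminus C$ put $b_x=c-\sum_{u\in C\cap N(x)}z_{xu}$ and $B=\sum_x b_x$. A missing spoke contributes $1$ and a present one contributes $1-z_{xu}\ge0$, so $D_C\le B$. Splitting $W$ into core, cross and row parts and using \eqref{eq:trianglestar} gives $B\le 7\varrho+Z(E(G-C))$, where $\varrho=M_n+sn-W$. The signed cover bound then controls the last term. This replaces your $D_C$ sketch, which ignores the positive weight that forward neighbourhoods can place on row edges. Consequently the resulting bound is not $O(\sqrt\gamma)\,n^2$; it carries the cover error $(\xi+1/(2(L-1)))n^2$.

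\emph{Bounding row edges.} Rows with $b_x\ge c/8$ number at most $8B/c$, so they meet at most $8Bn/c\le 32B$ row edges. Now suppose adjacent rows have $b_x,b_y<c/8$. Averaging over $u\in C$ yields some $u$ with $(1-z_{xu})+(1-z_{yu})<1/4$, where a missing spoke counts $1$. So both spokes are present, and $z_{xy}\le 1-z_{xu}-z_{yu}<-3/4$. There are therefore at most $\frac43 N_{\rm out}$ such edges. Hence $D_C+e(G-C)\le 33B+2\varrho$, which completes the proof.
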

\begin{proof}
It suffices to consider $0<\varepsilon<1/100$. We give estimates that specify the order of all parameter choices. Take an optimal dual vector, put $s=\rsd(G)$, and use Lemma~\ref{lem:localize}. If $\delta_0+\eta_0<7/192$, the lower bound on its value excludes \eqref{eq:largecore}. Set
\[
 \varrho=M_n+sn-W.
\]
Then $0\le\varrho\le(\delta_0+\eta_0)n^2+n/6+1/24$. If this is at most $\gamma n^2$ for a sufficiently small $\gamma$, equation \eqref{eq:localdeficit} gives
\begin{equation}\label{eq:rhoestimates}
 \left|c-\frac{2n+1}{6}\right|\le\sqrt{2\varrho/3},
 \quad \frac n4\le c\le\frac{5n}{12},
 \quad c-\alpha\le\frac{6\varrho}{n},
 \quad N_{\rm out}\le\varrho.
\end{equation}
Let $R=V(G)\setminus C$, and for $x\in R$ put
\[
 b_x=c-\sum_{u\in C\cap N(x)}z_{xu},\qquad B=\sum_{x\in R}b_x.
\]
Each missing spoke contributes one to $B$ and each present spoke contributes $1-z_{xu}\ge0$, so $D_C\le B$. Decomposing the total signed value and using \eqref{eq:trianglestar} yields
\[
 W\le B_n(c)-B+(c-1)(c-\alpha)+Z(E(G[R])).
\]
Since $B_n(c)\le M_n$ and \eqref{eq:rhoestimates} holds,
\begin{equation}\label{eq:rowB}
 B\le7\varrho+Z(E(G[R])).
\end{equation}
Apply Lemma~\ref{lem:signedcover} to the induced graph $G[R]$. With $A_0=(L-2)/\xi$ this gives
\begin{equation}\label{eq:Bbound}
 \frac{B}{n^2}\le
 \xi+\frac1{2(L-1)}+\delta_0+\frac3n+(7+A_0)\gamma.
\end{equation}

There are at most $8B/c\le32B/n$ vertices with $b_x\ge c/8$, and at most $32B$ row edges incident with them. If adjacent rows $x,y$ both have $b_x,b_y<c/8$, the sum of their paired spoke deficiencies is less than $c/4$. Some $u\in C$ has paired deficiency less than $1/4$. Both spokes are present, their weights sum to more than $7/4$, and the triangle constraint gives $z_{xy}<-3/4$. There are at most $2\varrho$ such edges, by \eqref{eq:rhoestimates}. Hence
\begin{equation}\label{eq:structuralbudget}
 D_C+e(G[R])\le33B+2\varrho.
\end{equation}

Choose $\xi>0$ and then $L\ge4$ so that
$33(\xi+1/[2(L-1)])<\varepsilon/4$. Next choose $\gamma>0$ so small that
\[
 \sqrt{2\gamma/3}<\varepsilon/4,
 \qquad \bigl(33(7+A_0)+2\bigr)\gamma<\varepsilon/4,
 \qquad\gamma<10^{-3}.
\]
Choose $\delta_0,\eta_0>0$ with
$\delta_0+\eta_0<\min\{\gamma/2,7/192\}$ and $33\delta_0<\varepsilon/8$.
Finally choose $n_0$ large enough that $n/6+1/24\le\gamma n^2/2$, $99/n<\varepsilon/8$, and the constant $1/6$ in the first estimate of \eqref{eq:rhoestimates} is at most $\varepsilon n/4$. Equations \eqref{eq:Bbound}--\eqref{eq:structuralbudget} and \eqref{eq:rhoestimates} prove both conclusions.
\end{proof}

\subsection{From fractional to integral bounded-clique partitions}

For a fixed graph $J$ without isolated vertices, let $\nu_J(G)$ and $\nu_J^*(G)$ denote its integral and fractional edge-packing numbers. We use
\begin{equation}\label{eq:HR}
 \nu_J(G)\ge\nu_J^*(G)-o(n^2)
 \qquad(|G|=n),
\end{equation}
uniformly over $G$. This is the theorem of Haxell and R\"odl \cite{HR}; Yuster gives a shorter proof and the fixed-family extension \cite{Yuster}. The graph $J$ may be disconnected, and copies need not be induced. Isolated vertices of the host have no effect: apply the theorem on the nonisolated vertices when their number exceeds its threshold, and use the bounded total number of edges otherwise.

The saving from a $j$-clique is $\binom j2-1$, so unweighted rounding of each size separately would not ensure edge-disjointness between the rounded families. The next argument bundles the sizes before applying \eqref{eq:HR}.

\begin{lemma}\label{lem:rounding}
For every fixed $L\ge3$ and every $\zeta>0$, all sufficiently large graphs satisfy
\begin{equation}\label{eq:boundedrounding}
 q_L(G)\le q_L^*(G)+\zeta |G|^2.
\end{equation}
\end{lemma}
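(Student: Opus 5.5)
The plan is to pass from the fractional bounded-clique partition to a fractional packing of one fixed graph $J$, which bundles all clique sizes together. We then apply \eqref{eq:HR} to $J$ and cover the remaining edges singly.

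\emph{Reformulation and trivial case.} Write the cost of a clique partition as $e(G)$ minus its saving, where a $j$-clique saves $\binom j2-1$. Take an optimal primal solution $x$ of \eqref{eq:LP}. Its cost is $q_L^*(G)=e(G)-\sigma^*$, where $\sigma^*=\sum_Q(\binom{|Q|}2-1)x_Q$ and only cliques of order $3\le j\le L$ contribute. It then suffices to find edge-disjoint cliques of orders $3,\dots,L$ with total saving at least $\sigma^*-\zeta n^2$, and to cover the rest by edges. Let $m_j=\sum_{|Q|=j}x_Q$ and $M=\sum_{j\ge3}m_j$. Since $\sigma^*\le\binom L2 M$, the case $M\le\zeta n^2/\binom L2$ is handled by using edges alone.

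\emph{Bundling into one graph $J$.} Otherwise $M\ge\zeta' n^2$ with $\zeta'=\zeta/\binom L2$. Fix a large integer $T$ depending only on $\zeta$ and $L$, and choose integers $a_j\ge0$ with $\sum_j a_j=T$ and $\sum_j a_j(\binom j2-1)\ge (T/M)\sum_j m_j(\binom j2-1)-\zeta'' T$. Such a choice exists because only finitely many $(a_j/T)$ approximations are needed. Let $J$ be the disjoint union of $a_j$ copies of $K_j$. There are finitely many possible $J$, so the $o(n^2)$ in \eqref{eq:HR} is uniform.

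To build a fractional $J$-packing, weight each tuple of cliques $(Q_{j,i})$ with $|Q_{j,i}|=j$ by $\lambda\prod_{j,i}x_{Q_{j,i}}/m_j$, restricted to pairwise vertex-disjoint tuples. The scale $\lambda$ is chosen so that each edge has total load at most $1$; roughly $\lambda\approx M/T$ suffices, since in the product measure an edge's expected load is $\sum_j a_j\cdot(\text{its }x\text{-load from }j\text{-cliques})/m_j\cdot\lambda$. Here I would rescale by $\min_j$ or thin the $a_j$ to make the loads balance; this is the point requiring care.

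\emph{The overlap estimate.} The key estimate, and the expected obstacle, is that discarding overlapping tuples costs little. Every clique through $v$ uses an edge at $v$, so $\sum_{Q\ni v}x_Q\le d(v)\le n$. Two independent draws from the normalized $x$-distribution on cliques of order at least $3$ therefore share a vertex with probability at most
\[
\sum_v L^2\Bigl(\sum_{Q\ni v}x_Q\Bigr)^2\Big/M^2\le L^2n^3/(\zeta' n^2)^2=O(1/n).
\]
A union bound over $\binom T2$ pairs shows that only an $O(T^2/n)$ fraction of the mass is lost. Hence $\nu_J^*(G)\ge(M/T)(1-o(1))$, up to the balancing adjustment. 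Applying \eqref{eq:HR} gives $\nu_J(G)\ge\nu_J^*(G)-o(n^2)$ edge-disjoint copies of $J$. Each copy contributes saving at least $(T/M)\sigma^*/T-\zeta''$ per unit, so the total saving is at least $\sigma^*-(\zeta''\binom L2 n^2+o(n^2))$. Covering the remaining edges singly yields \eqref{eq:boundedrounding} once $\zeta''$ is small and $n$ is large.

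If the load balancing in the second step fails because different sizes load edges unevenly, I would instead apply Yuster's fixed-family form of \eqref{eq:HR} to the finite family of all such $J$, whose fractional optimum dominates the mixture above.
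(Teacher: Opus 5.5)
Your plan follows the paper's proof in outline. You bundle the clique sizes into one template $J$, a disjoint union of $a_j$ copies of $K_j$ with $a_j$ proportional to the $j$-mass. You build a fractional $J$-packing whose edge loads are dominated by those of $x$, apply \eqref{eq:HR} to the finitely many possible templates, and expand each copy into its cliques.

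The genuine difference is how you produce the fractional $J$-packing. The paper peels it off greedily. While bundles of total mass $u<\alpha n^2-hn$ have been selected, each required size still has remaining mass $t_j-a_ju>hn$ (your $m_j$ is its $t_j$). This exceeds the mass meeting any $h$ vertices, so vertex-disjoint components exist, and their common minimum weight can be subtracted. Load domination then holds by construction. Your product measure computes loads by linearity instead, at the price of a collision estimate. Both lose only $O(n)$ mass.

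The step you flag as requiring care is, however, left open, and as written it fails. The load on an edge $e$ is $\lambda\sum_j a_jx_j(e)/m_j$, where $x_j(e)$ is the $x$-load on $e$ from $j$-cliques. Insisting on $\sum_j a_j=T$ forces some $a_j>Tm_j/M$. If that $m_j$ is small, the size-$j$ components are drawn from a distribution concentrated on few cliques. The load can then be as large as $\lambda a_j\ge M/T$, for instance when all $j$-mass sits on cliques through $e$. Rescaling to $\lambda=\min_j m_j/a_j$ restores feasibility but can discard almost all of the mass.

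Your collision estimate has the same defect. It is computed for the $M$-normalized distribution, whereas your size-$j$ components are drawn from $x_Q/m_j$. The correct bound for components of sizes $j,j'$ is $\sum_v\Pr[v\in Q]\Pr[v\in Q']\le \frac{n}{2m_j}\,j'$. This is $O(1/n)$ only when every used $m_j$ has order $n^2$.

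Both problems disappear with $a_j=\lfloor Tm_j/M\rfloor$:
\begin{itemize}
\item At $\lambda=M/T$ the load on $e$ is at most $\sum_j x_j(e)\le 1$.
\item Every size with $a_j\ge1$ has $m_j\ge M/T\ge \zeta n^2/(\binom L2 T)$, so the collision loss is $O_{T,L,\zeta}(1/n)$.
\item The saving lost to quantization is at most $(M/T)\sum_j(\binom j2-1)$, which is below $\zeta n^2/4$ for large $T$, since $M\le n^2/6$.
\end{itemize}
This is precisely the paper's choice $a_j=\lfloor t_j/(\alpha n^2)\rfloor$ with $\alpha n^2=M/T$.

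Finally, drop the fallback through Yuster's family version. An integral packing from a family of templates is controlled only in its number of copies, not in its total saving. It may therefore trade high-saving templates for low-saving ones, and a single template $J$ is what prevents this.
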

\begin{proof}
An exact fractional partition is equivalently a fractional packing of cliques of orders $3,\ldots,L$, with unused edge capacities assigned to singleton edges. Write $t_j$ for the total packing mass of $j$-cliques, and $w_j=\binom j2-1$ for its saving. Each $t_j\le n^2/6$.

Choose $\alpha>0$ with $\alpha\sum_{j=3}^Lw_j<\zeta/4$, put
$a_j=\lfloor t_j/(\alpha n^2)\rfloor$, and let $J$ be the disjoint union of $a_j$ copies of $K_j$ for each $j$. For fixed $L,\alpha$, only finitely many templates $J$ arise. If all $a_j=0$, the total saving is less than $\zeta n^2/4$, and edges alone suffice. Otherwise set
\[
 h=|J|,\qquad w_J=\sum_{j=3}^La_jw_j>0.
\]
We construct a fractional $J$-packing of mass at least $\alpha n^2-hn$, for $n$ large enough that this number is positive.

The total fractional mass of cliques meeting any fixed vertex is at most $(n-1)/2$: every such clique uses at least two of its incident edge capacities. Thus the mass meeting any set of at most $h$ vertices is less than $hn$. Suppose bundles of total mass $u<\alpha n^2-hn$ have been selected. For any required size $j$, the remaining mass is
\[
 t_j-a_ju>a_jhn\ge hn.
\]
We can therefore choose all the required component cliques successively, each disjoint in vertices from those already chosen in this bundle. Their union, with only their component edges, is a copy of $J$. Subtract the minimum of their remaining weights, capped at the amount needed to reach $\alpha n^2-hn$, and add that weight to the bundle. Each nonfinal step exhausts a fractional variable, so this finite procedure terminates. No edge load increases beyond its original packing load.

Apply \eqref{eq:HR} with error $\zeta/(4w_J)$ and expand each integral copy of $J$ into its clique components. The resulting saving is at least
\[
 w_J\left(\alpha n^2-hn-\frac{\zeta n^2}{4w_J}\right).
\]
Quantization lost less than $\zeta n^2/4$ from the original saving. For sufficiently large $n$, the additional term $w_Jhn$ is at most $\zeta n^2/2$. The total loss is at most $\zeta n^2$. Taking a maximum of the thresholds over the finite collection of templates proves the uniform assertion.
\end{proof}

\begin{proof}[Proof of Theorem~\ref{thm:stabilityintro}]
Fix $\varepsilon>0$, and choose $L,\delta_0,\eta_0$ from Theorem~\ref{thm:fracstability}. The cutoff $L$ is now fixed. Apply Lemma~\ref{lem:rounding} with error $\eta_0n^2/2$. Since $q_L(G)\ge\cp(G)$, the hypotheses on the sequence imply, for all sufficiently large indices,
\[
 q_L^*(G_j)\ge(1/6-\eta_0)n_j^2,
 \qquad\rsd(G_j)\le\delta_0n_j.
\]
Theorem~\ref{thm:fracstability} supplies a clique with the required errors at accuracy $\varepsilon$. Minimize, over the finitely many cliques of each graph, the maximum of the two normalized errors. These minima tend to zero, giving the asserted sequence of cliques. This argument never applies fixed-cutoff rounding with a varying cutoff.

For \eqref{eq:allgraph}, use $L=4$ in Lemmas~\ref{lem:localize} and \ref{lem:rounding}. Their uniform remainder defines a function $\epsilon(n)\to0$ with the claimed bound.
\end{proof}

\section{Compatible integral partitions}\label{sec:construction}

We first construct partitions for an arbitrary division of the vertices into a core and an exterior. The core need not induce a complete graph. Its actual edge count will determine the final bound.

\begin{lemma}\label{lem:saturate}
Let $V(G)=C\sqcup R$, with $|C|=c\ge3$ and $|R|=b$. Suppose every vertex of $C$ misses at most $\tau$ vertices of $R$. If
\begin{equation}\label{eq:saturate}
 b\ge3\lceil c/2\rceil-3+4\tau,
\end{equation}
then there are $e(G[C])$ edge-disjoint triangles, each consisting of one edge of $G[C]$ and one vertex of $R$.
\end{lemma}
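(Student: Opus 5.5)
The plan is to realize the triangles as a proper edge-colouring of $G[C]$ whose colours are vertices of $R$, subject to the constraint that an edge $uv$ of colour $x$ needs $x$ adjacent to both $u$ and $v$. Properness makes the triangles edge-disjoint. Two triangles sharing a core edge are excluded because each core edge gets one colour. Two triangles sharing a spoke $xu$ would be two core edges at $u$ with the same colour $x$, which properness forbids. So it suffices to find a proper list edge-colouring of $G[C]$ in which the list of $uv$ is $L(uv)=N(u)\cap N(v)\cap R$. Since each core vertex misses at most $\tau$ exterior vertices, every list satisfies $|L(uv)|\ge b-2\tau$.

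Galvin's theorem is the tool the paper signals. It says a bipartite multigraph is edge-choosable with lists of size equal to its maximum degree. For a general core I would first decompose $G[C]\subseteq K_c$ into a small number of bipartite pieces with controlled degrees.

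\begin{enumerate}[label=\textup{(\arabic*)}]
\item Split $C$ into halves $C_1,C_2$ of sizes $\lceil c/2\rceil$ and $\lfloor c/2\rfloor$. The crossing edges form a bipartite graph of maximum degree at most $\lceil c/2\rceil$.
\item The edges inside $C_1$ and inside $C_2$ form a disjoint union of two graphs on at most $\lceil c/2\rceil$ vertices. By the classical colouring of $K_m$ this union is properly $(2\lceil c/2\rceil-1)$-edge-colourable, or $2\lceil c/2\rceil-3$ after a careful count. The count $3\lceil c/2\rceil-3$ in \eqref{eq:saturate} suggests the intended split: about $2\lceil c/2\rceil-3$ colours for the inner edges plus $\lceil c/2\rceil$ for the crossing ones.
\item Colour the inner edges greedily or by a fixed pattern, using distinct exterior vertices from the lists. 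Then remove from each crossing edge's list the colours already used at its endpoints. Each endpoint has inner degree at most $\lceil c/2\rceil-1$, so at most $2(\lceil c/2\rceil-1)$ colours are lost. The remaining list has size at least $b-2\tau-2\lceil c/2\rceil+2$.
\item Finish by Galvin's theorem on the bipartite crossing graph. This needs remaining lists of size at least $\lceil c/2\rceil$.
\end{enumerate}

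The bookkeeping has to reach \eqref{eq:saturate} with $4\tau$ rather than $2\tau$. Two $\tau$ losses occur: the lists shrink by $2\tau$ in the inner stage, and again by $2\tau$ in the crossing stage. The inner stage must itself be realized inside the lists. I would try to do this by greedy list colouring of the inner graphs. An inner edge conflicts with at most $2(\lceil c/2\rceil-2)$ other inner edges, so greedy succeeds once $b-2\tau>2\lceil c/2\rceil-4$. This is weaker than what is needed, so it is fine.

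Step (4) is then the binding constraint:
\[
 b-2\tau-2(\lceil c/2\rceil-1)\ge\lceil c/2\rceil ,
\]
which is $b\ge3\lceil c/2\rceil-2+2\tau$. This overshoots the stated $-3$ by one. The $-3$ should come from a sharper count of inner-colour conflicts, for example by having inner colours at a vertex be only $\lceil c/2\rceil-1$ in the smaller half, or by using the degree in the crossing graph more carefully. The larger slack $4\tau$ leaves room to absorb this.

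The main obstacle is step (3): ensuring the inner colouring removes few colours from each crossing list while matching the exact constant $3\lceil c/2\rceil-3+4\tau$. If the greedy count is off by one, I would instead apply Galvin's theorem to the bipartite double cover structure. Alternatively, I would colour each half's inner edges using a near-one-factorization of $K_{\lceil c/2\rceil}$ or $K_{\lceil c/2\rceil+1}$, so that each vertex sees at most $\lceil c/2\rceil-1$ inner colours.
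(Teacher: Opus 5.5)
Your sequential scheme (greedy on the inner edges from the lists $N_R(u)\cap N_R(v)$, then Galvin's theorem on the crossing graph after deleting the inner colours at the two endpoints) is sound, and your edge-disjointness argument is right. The gap is the off-by-one in step (4), which you flag but do not resolve.

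Put $a=\lceil c/2\rceil$. For even $c$ both halves have order $a$, so the two endpoints of a crossing edge may together carry $2(a-1)$ distinct inner colours. Your count therefore needs $b\ge 3a-2+2\tau$. The hypothesis $b\ge3a-3+4\tau$ gives this only when $\tau\ge1$; that is the true content of your remark that the slack $4\tau$ absorbs the defect. In the case $\tau=0$, $c$ even, $b=3a-3$, a residual crossing list can have only $a-1$ colours, one fewer than Galvin's theorem needs. Your suggested repairs bound the inner colours at a single vertex, which is already at most $a-1$. What is needed is a bound on the union over both endpoints. (For odd $c$, your refinement via the smaller half does give $b\ge3a-3+2\tau$.) Also, in your scheme each list pays the $2\tau$ loss only once, since both stages draw from the same lists. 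So your explanation of where $4\tau$ comes from does not match your own argument.

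The idea that produces the exact constant is to separate the two stages by disjoint palettes, rather than by deleting used colours. The paper fixes disjoint $P_1,P_2\subseteq R$ with $|P_1|=a+2\tau$ and $|P_2|=2a-3+2\tau$.
\begin{itemize}
\item Each pair has at least $a$ admissible colours in $P_1$, so Galvin's theorem colours the crossing pairs from $P_1$.
\item Each pair has at least $2a-3$ admissible colours in $P_2$, against at most $2a-4$ adjacent inner pairs, so greedy colours the inner pairs from $P_2$.
\end{itemize}
The two colourings cannot conflict, and each palette pays its own $2\tau$; this is exactly where $3a-3+4\tau$ comes from.

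To finish your own proof, it is enough to handle $\tau=0$ in this spirit. All lists are then $R$, so colour the inner edges greedily from a fixed set of $2a-3$ vertices of $R$. This leaves every crossing edge at least $b-(2a-3)\ge a$ colours. Alternatively, use $\chi'(K_c)\le3\lceil c/2\rceil-3$ for $c\ge3$. With that patch your argument proves the lemma, and for $\tau\ge1$ it even works under the weaker condition $b\ge3a-2+2\tau$.
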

\begin{proof}
Put $a=\lceil c/2\rceil$ and divide $C$ into two parts whose orders differ by at most one. For every pair $uv$ of core vertices, its list of allowable colours is $N_R(u)\cap N_R(v)$. At most $2\tau$ vertices of $R$ are absent from this list.

Choose disjoint palettes in $R$ of sizes $a+2\tau$ and $2a-3+2\tau$. On the abstract complete bipartite graph between the two halves of $C$, every list in the first palette has size at least $a$, its maximum degree. Galvin's theorem \cite[Theorem~4.1]{Galvin} gives a proper list edge-colouring. Within each half, use the second palette. Every list has size at least $2a-3$, whereas an edge has at most $2a-4$ adjacent edges, so sequential greedy colouring succeeds. Colours can be reused between the two halves, whose vertex sets are disjoint. The palettes for the crossing and internal pairs are disjoint.

We have properly coloured the pairs of an abstract complete graph on $C$. Retain only the colours of the actual edges of $G[C]$, and extend each such edge through its assigned exterior vertex. Properness prevents any cross edge from occurring twice. Distinct triangles use distinct core edges.
\end{proof}

For a partition $V(G)=C\sqcup R$, use the notation
\begin{gather}
 c=|C|,\quad b=|R|,\quad h=e(G[C]),\quad m=e(G[R]),\quad
 D=cb-e_G(C,R),\label{eq:cutnotation}\\
 \sigma=\max_{x\in R}|C\setminus N(x)|,
 \qquad \tau=\max_{u\in C}|R\setminus N(u)|.\label{eq:cutdeficiencies}
\end{gather}
A maximum over an empty set is zero. In all applications of the following lemma, both sides are nonempty.

\begin{lemma}\label{lem:absorption}
Suppose $c\ge3$, and $G[R]$ has a proper edge-colouring with $k$ colours such that
\begin{align}
 c&\le k\le2(c-2\sigma),\label{eq:capacityone}\\
 b&\ge3\lceil c/2\rceil-3+4\tau+8\left\lceil\frac mk\right\rceil.
 \label{eq:capacitytwo}
\end{align}
Then
\begin{equation}\label{eq:absorption}
 q_3(G)\le cb-D-h-
 \left(\frac{2(c-2\sigma)}k-1\right)m.
\end{equation}
\end{lemma}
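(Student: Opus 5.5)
The plan is to realise the bound by an explicit partition of $E(G)$ into triangles and single edges. Such a partition with $t$ triangles has $|E(G)|-2t$ parts, and
\[
|E(G)|=(cb-D)+h+m .
\]
So \eqref{eq:absorption} is equivalent to exhibiting
\[
t\ \ge\ h+\frac{c-2\sigma}{k}\,m
\]
edge-disjoint triangles. There will be two families of triangles:
\begin{itemize}
\item \emph{row triangles}, each consisting of an edge of $G[R]$ and one core vertex;
\item \emph{core triangles}, each consisting of an edge of $G[C]$ and one exterior vertex.
\end{itemize}
Every core triangle will come from Lemma~\ref{lem:saturate}, and every row triangle from a random assignment of colour classes to core vertices.

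\textbf{Row triangles.} First apply the second part of Lemma~\ref{lem:colouring} to the given proper $k$-edge-colouring of $G[R]$. This makes the colour classes $M_1,\dots,M_k$ matchings of size at most $\lceil m/k\rceil$. Since $k\ge c$ by \eqref{eq:capacityone}, choose a uniformly random injection $\phi\colon C\to\{1,\dots,k\}$. For each core vertex $u$ and each edge $xy\in M_{\phi(u)}$ with $u\in N(x)\cap N(y)$, take the triangle $uxy$.

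These triangles are edge-disjoint:
\begin{itemize}
\item each row edge has one colour, and that colour has at most one core vertex;
\item a spoke $xu$ can be used only through the unique edge of the matching $M_{\phi(u)}$ at $x$.
\end{itemize}
Fix an edge $xy$ of colour $i$. The probability that colour $i$ receives some core vertex is $c/k$. Given that it does, the vertex is uniform on $C$, and at most $2\sigma$ core vertices miss $x$ or $y$. So $xy$ lies in a row triangle with probability at least $(c-2\sigma)/k$. Choose an outcome with at least the expected number, $(c-2\sigma)m/k$, of row triangles.

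\textbf{Core triangles.} At each core vertex $u$, the row triangles use spokes to at most $2|M_{\phi(u)}|\le2\lceil m/k\rceil$ vertices of $R$. Delete all spokes used by row triangles, and apply Lemma~\ref{lem:saturate} to the resulting graph with the same division $C\sqcup R$. In that graph each core vertex misses at most $\tau'=\tau+2\lceil m/k\rceil$ exterior vertices. Hypothesis \eqref{eq:capacitytwo} is exactly \eqref{eq:saturate} with $\tau$ replaced by $\tau'$, using $4\tau'=4\tau+8\lceil m/k\rceil$.

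Lemma~\ref{lem:saturate} then gives $h$ edge-disjoint triangles, each using one core edge and two spokes still present. These are disjoint from the row triangles:
\begin{itemize}
\item their core edges are not used by row triangles;
\item their spokes were not deleted;
\item they contain no row edges.
\end{itemize}
All remaining edges are taken singly, which gives the count above. The hypothesis $c\ge3$ is what Lemma~\ref{lem:saturate} needs. The upper bound $k\le2(c-2\sigma)$ seems only to keep the coefficient of $m$ in \eqref{eq:absorption} nonnegative, so that the statement is meaningful; I do not expect to use it.

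\textbf{Main obstacle.} The delicate step is making the two families compatible. The saturating construction must be run after the row triangles are fixed, and it is the balancing of the colour classes that bounds the spokes consumed at each core vertex by $2\lceil m/k\rceil$. I would check carefully that Galvin's list condition inside Lemma~\ref{lem:saturate} survives with the enlarged deficiency $\tau'$. I would also confirm that the averaging bound is not spoiled by colour classes left without a core vertex; the injection argument already accounts for those.
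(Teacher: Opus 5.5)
Your proposal is correct and follows the paper's proof essentially step for step. You balance the colour classes via Lemma~\ref{lem:colouring}, randomly inject $C$ into the $k$ colours to obtain at least $(c-2\sigma)m/k$ edge-disjoint row triangles, apply Lemma~\ref{lem:saturate} to the residual graph with $\tau'=\tau+2\lceil m/k\rceil$, and count $e(G)-2(h+t)$ parts. You are also right that the upper bound $k\le2(c-2\sigma)$ is not used in the proof itself; it only keeps the coefficient of $m$ nonnegative for later applications.
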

\begin{proof}
By Lemma~\ref{lem:colouring}, we can balance the colour classes so that each has size at most $\lceil m/k\rceil$. Inject the $c$ core vertices uniformly into the $k$ colours. A row edge has at least $c-2\sigma$ common core neighbours. For each such neighbour, the probability that its assigned colour equals the edge's colour is $1/k$, and these events are disjoint. Thus some injection retains at least
\[
 t\ge\frac{m(c-2\sigma)}k
\]
row edges whose assigned core vertex is adjacent to both endpoints. Extend these edges to triangles through the assigned vertices. Each colour class is a matching, so these triangles are edge-disjoint. A core vertex spends at most $2\lceil m/k\rceil$ cross edges.

Delete this first triangle family. No core edge has been used, and the remaining column deficiency is at most
\[
 \tau'=\tau+2\left\lceil\frac mk\right\rceil.
\]
Equation~\eqref{eq:capacitytwo} permits Lemma~\ref{lem:saturate} to extend every actual core edge to a triangle in the residual graph. These triangles are disjoint from the first family. Take all remaining edges singly. Since $e(G)=cb-D+h+m$, the number of parts is
\begin{align}
 e(G)-2(h+t)
 &=cb-D-h+m-2t\nonumber\\
 &\le cb-D-h-
 \left(\frac{2(c-2\sigma)}k-1\right)m.
 \label{eq:exactcount}
\end{align}
No elimination property is asserted for the edge-deleted residual.
\end{proof}

The coefficient of $m$ in \eqref{eq:absorption} is positive when the second inequality in \eqref{eq:capacityone} is strict. This strictness will determine the equality graphs.

\begin{lemma}\label{lem:exceptions}
Let $V(G)=C\sqcup R\sqcup T$, where $C$ is a clique, $c=|C|\ge3$, and $t=|T|$. Define the parameters in \eqref{eq:cutnotation}--\eqref{eq:cutdeficiencies} on $G[C\cup R]$. Suppose $G[R]$ has a proper $k$-edge-colouring and \eqref{eq:capacityone}--\eqref{eq:capacitytwo} hold after replacing $\sigma,\tau$ by $\sigma+t,\tau+t$. For $x\in T$, put
\[
 u_x=|N(x)\cap C|,\qquad v_x=|N(x)\cap R|.
\]
Then
\begin{equation}\label{eq:exceptions}
 \cp(G)\le cb-\binom c2+
 \sum_{x\in T}|v_x-u_x|+4t^2.
\end{equation}
\end{lemma}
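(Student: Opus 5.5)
We obtain the bound on $\cp(G)$ in \eqref{eq:exceptions} by merging the exceptional set $T$ into the exterior and then re-accounting the edges incident with $T$. Concretely, put $R'=R\cup T$ and apply Lemma~\ref{lem:absorption} to the partition $V(G)=C\sqcup R'$. Since $C$ is a clique, $h=\binom c2$. The exterior graph $G[R']$ differs from $G[R]$ only by edges incident with $T$, and we do not try to absorb those.

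The cleanest way to make this rigorous is to run the construction of Lemma~\ref{lem:absorption} directly on $G[C\cup R]$ with the enlarged parameters, and then deal with $T$. The enlargement $\sigma+t,\tau+t$ is exactly the room needed to reserve, for each core vertex and each row, up to $t$ further neighbours. The plan is as follows.

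\emph{Step 1.} Apply Lemma~\ref{lem:absorption} to $G[C\cup R]$ with $\tau$ replaced by $\tau+t$. The hypotheses hold by assumption, and a larger $\tau$ only strengthens \eqref{eq:capacitytwo}. We use the slack so that the saturating triangles of Lemma~\ref{lem:saturate} avoid a prescribed set of at most $t$ further exterior vertices per core pair. In fact we only need the core-edge triangles, which will now be allowed to use vertices of $T$ as well. Lemma~\ref{lem:absorption} gives a partition of $E(G[C\cup R])$ into at most $cb-D-\binom c2-(\cdot)m\le cb-\binom c2$ parts. The coefficient of $m$ is nonnegative because $k\le 2(c-2\sigma-2t)$.

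\emph{Step 2.} Handle $T$ greedily, one vertex $x$ at a time. The edges at $x$ are its $u_x$ edges to $C$, its $v_x$ edges to $R$, and at most $t$ edges inside $T$. We pair them into triangles $x\,p\,q$ with $p\in N(x)\cap C$ and $q\in N(x)\cap R$, where the edge $pq$ currently appears as a single-edge part. Each such triangle replaces three parts ($xp$, $xq$, and the single part $pq$) by one. So it saves one part relative to counting the $T$-incident edges singly, and it reduces the count for $x$ to about $u_x+v_x-2\min(u_x,v_x)=|v_x-u_x|$. Finally, the edges inside $T$ together with the failures of the pairing cost at most $4t^2$.

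\emph{Step 3.} We need $\min(u_x,v_x)$ disjoint pairs $(p,q)$ whose cross edge $pq$ is still a single unused part. For this we use a matching between $N(x)\cap C$ and $N(x)\cap R$ in the bipartite graph of unused cross edges. Each core vertex misses at most $\tau$ rows, and in the first triangle family it spent at most $2\lceil m/k\rceil$ cross edges, plus some in saturation. Every row misses at most $\sigma$ core vertices. Because \eqref{eq:capacitytwo} leaves $b$ large compared with $c$, we expect each core vertex to keep many unused cross edges to $N(x)\cap R$. Each earlier vertex of $T$ destroys at most one unused cross edge at each core and row vertex, which accounts for a further loss of $O(t)$ per vertex and $O(t^2)$ in total.

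\textbf{Main obstacle.} The hard step is guaranteeing this matching. Cross edges also serve as legs of the saturating triangles: each core vertex $u$ uses roughly $c-1$ of its cross edges there. So the unused cross edges at $u$ number about $b-\tau-c-2m/k$. That is comparable to $b/3$, not close to $b$.

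First I would try a Hall-type count. Unused cross edges at core vertices are plentiful, but rows may have few unused edges to $N(x)\cap C$. If Hall fails, we sacrifice the deficit, charging it to $4t^2$ or to $|v_x-u_x|$.

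If that does not work, I would instead reorder the construction. First reserve, for each $x\in T$, a set of $\min(u_x,v_x)$ disjoint cross edges in $N(x)$. Then run Lemma~\ref{lem:absorption} with those edges counted as extra column deficiency. This costs at most $t$ per vertex, which is exactly the enlargement $\tau+t$ in the hypothesis, and the same holds for $\sigma+t$ on rows. This second plan is the one I would ultimately commit to.
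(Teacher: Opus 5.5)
Your skeleton matches the paper's in its committed form. You reserve, for each $x\in T$, a matching of cross edges inside $N(x)$, use those edges in triangles $xpr$, and apply Lemma~\ref{lem:absorption} to the residual graph on $C\cup R$. Its row and column deficiencies grow by at most $t$, because each matching meets every vertex at most once.

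The gap is your assumption that these matchings can have size $\min\{u_x,v_x\}$. The bipartite graph $B_x$ of cross edges between $N(x)\cap C$ and $N(x)\cap R$ need not contain such a matching. The hypotheses allow every row of $N(x)\cap R$ to miss all of $N(x)\cap C$; this is consistent whenever $u_x\le\sigma$ and $v_x\le\tau$. Then $B_x$ has no edges, and the shortfall is $\min\{u_x,v_x\}$, which is not bounded in terms of $t$. So your fallback of charging the Hall deficit to $4t^2$ or to $|v_x-u_x|$ fails, and the latter is already the main term of \eqref{eq:exceptions}. In Step~1 you also weakened the absorption bound to $cb-\binom c2$. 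That discards the $-D$ term, which is exactly the resource that pays for the shortfall.

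The missing idea is that a matching shortfall is witnessed by missing cross pairs. Choose matchings $M_x\subseteq E(B_x)$ that are pairwise disjoint as sets of cross edges and maximize $\sum_x|M_x|$. Put $q_x=|M_x|$ and $L_x=\min\{u_x,v_x\}-q_x$. Take $L_x$ vertices of $N(x)\cap C$ and $L_x$ vertices of $N(x)\cap R$ left unmatched by $M_x$. No edge of $B_x$ unused by the other matchings joins these two sets, since it could be added to $M_x$. The other matchings use at most $t$ edges at each vertex, so this $L_x\times L_x$ rectangle contains at least $L_x^2-tL_x$ missing pairs, all counted in $D$. Hence $L_x\le\sqrt D+t$.

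With $q=\sum_xq_x$, the residual graph has missing-cross count $D+q$, so absorption gives at most $cb-D-q-\binom c2$ parts. Now add the $q$ triangles, the $u_x+v_x-2q_x=|v_x-u_x|+2L_x$ unused edges at each $x$, and the at most $\binom t2$ edges of $G[T]$. This gives
\[
 \cp(G)\le cb-\binom c2+\sum_{x\in T}|v_x-u_x|+\Bigl(2\sum_{x\in T}L_x-D\Bigr)+\binom t2 .
\]
Finally, $2\sum_xL_x-D\le2t\sqrt D-D+2t^2\le3t^2$ finishes the proof.
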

\begin{proof}
For each $x\in T$, let $B_x$ be the bipartite graph between
$N(x)\cap C$ and $N(x)\cap R$ whose edges are the corresponding cross
edges of $G$. Choose matchings $M_x\subseteq E(B_x)$, pairwise disjoint as
sets of cross edges, so that $\sum_{x\in T}|M_x|$ is maximum. Such a family
exists because the graph is finite. For a fixed $x$, the matching $M_x$ is
maximum after the edges used by the other matchings have been deleted;
otherwise replacing $M_x$ would increase the total size.

Put $q_x=|M_x|$ and
\[
 L_x=\min\{u_x,v_x\}-q_x.
\]
In the available bipartite graph for $x$, choose $L_x$ unmatched vertices
on each side. No available edge joins the two chosen sets, since $M_x$ is
maximum. At any one vertex, the other matchings use at most $t$ cross
edges, one from each matching. The rectangle between the two chosen sets
therefore contains at most $tL_x$ edges of $G$, while it has $L_x^2$
pairs. Since $D$ is the total number of missing pairs between $C$ and $R$,
\[
 L_x^2-tL_x\le D.
\]
If $L_x\le t$ then $L_x\le\sqrt D+t$. If $L_x>t$, the preceding inequality
gives $(L_x-t)^2\le L_x(L_x-t)\le D$. Hence in all cases
\begin{equation}\label{eq:matchingloss}
 L_x\le\sqrt D+t,
 \qquad
 \sum_{x\in T}L_x\le t\sqrt D+t^2.
\end{equation}

For every edge $cr\in M_x$, use the triangle $xcr$. These triangles are
edge-disjoint: each $M_x$ is a matching, and the matchings are disjoint in
their cross edges. Remove all their cross edges from $G[C\cup R]$. At any
vertex of $C\cup R$, at most one edge from each $M_x$ has been removed, so
the maximum row and column deficiencies increase by at most $t$. The
internal graphs on $C$ and $R$ are unchanged. Since $c\ge3$, the hypotheses
permit Lemma~\ref{lem:absorption} to be applied to the residual graph. We
drop its nonnegative residual-edge saving. If $q=\sum_{x\in T}q_x$, then
the residual missing-cross count is $D+q$.

Add the $q$ exceptional triangles, take every unused edge from $T$ to
$C\cup R$ singly, and also take all edges of $G[T]$ singly. Since $C$ is a
clique, the number of parts is at most
\begin{align*}
 cb-\binom c2-D
 &+\sum_{x\in T}(u_x+v_x-2q_x)+e(G[T])\\
 &=cb-\binom c2-D
   +\sum_{x\in T}|v_x-u_x|+2\sum_{x\in T}L_x+e(G[T]).
\end{align*}
Using \eqref{eq:matchingloss}, the inequality
$-D+2t\sqrt D\le t^2$, and $e(G[T])\le\binom t2$, the error beyond the
claimed main terms is at most
\[
 t^2+2t^2+\binom t2\le4t^2.
\]
\end{proof}

\section{Near-extremal structure and exactness}\label{sec:rigidity}

Fix $s\ge0$. In this section all asymptotic notation refers to a sequence with $n\to\infty$, while $s$ is held fixed. We use the constants
\begin{equation}\label{eq:smallconstants}
 \lambda=\frac1{100(s+1)},\qquad \rho=\frac{\lambda}{100}.
\end{equation}
They satisfy
\[
 8\rho<\lambda/3,
 \qquad 4\lambda<1/6,
 \qquad (2s+2)\lambda=1/50.
\]

\subsection{A bounded exceptional set}

\begin{lemma}\label{lem:normalform}
Suppose $\rsd(G)\le s$, $\delta(G)\ge n/3-o(n)$, and $G$ has a clique $A$ with
\[
 |A|=n/3+o(n),\qquad D_A+e(G-A)=o(n^2).
\]
For all sufficiently large orders there is a partition
$V(G)=C\sqcup R\sqcup T$, with $C$ a clique and $|T|\le T_0(s)$, such that
\begin{gather}
 |C|=n/3+o(n),\qquad D+e(G[R])=o(n^2),
 \qquad\omega(G[R])=o(n),\label{eq:normalmass}\\
 \sigma\le2\rho n,\qquad \tau\le\lambda n,
 \qquad\Delta(G[R])\le |R|-\lambda n/2.
 \label{eq:normaluniform}
\end{gather}
Here $D,\sigma,\tau$ refer to $C,R$. The bound $T_0(s)$ is independent of the sequence and of the rates in its error terms.
\end{lemma}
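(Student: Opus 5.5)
The plan is to start from $A$, put $C_0=A$ and $R_0=V(G)\setminus A$, and then move a small number of vertices. First, discard into a provisional exceptional set $X$ every vertex of $A$ missing more than $\lambda n/2$ vertices of $R_0$. Also discard every vertex of $R_0$ missing more than $\rho n$ vertices of $A$. Since $D_A=o(n^2)$, Markov's inequality gives $|X|=o(n)$. The vertices discarded from $A$ can be placed into the exterior side at a cost of $o(n^2)$ edges in $e(G[R])$, because there are only $o(n)$ of them. After this, the surviving core $C$ is still a clique with $|C|=n/3+o(n)$, $D=o(n^2)$ and $e(G[R])=o(n^2)$. The condition $\omega(G[R])=o(n)$ is then automatic: a clique of order $\varepsilon n$ in $R$ would already contribute $\varepsilon^2n^2/2-O(n)$ edges to $e(G[R])$. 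The point of the lemma is therefore not the mass estimates in \eqref{eq:normalmass} but the uniform bounds \eqref{eq:normaluniform}, together with the requirement that the vertices violating them be \emph{boundedly} many, so that they can be put into $T$.

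The tool for boundedness is Lemma~\ref{lem:forbidden}, which I will use in two configurations. Configuration (a) concerns the vertices $x$ of $R$ with $d_{G[R]}(x)>|R|-\lambda n/2$. Suppose this set contains $s+1$ vertex-disjoint nonedges. Their $2s+2$ endpoints have a common neighbourhood in $R$ of order at least $|R|-(2s+2)\lambda n/2=|R|-n/100$. That set spans $o(n^2)$ edges, so by Tur\'an's theorem it contains $s+2$ independent vertices, contradicting Lemma~\ref{lem:forbidden}. Hence, after deleting at most $2s$ vertices, the high-degree vertices of $R$ form a clique $K$. Configuration (b) concerns the vertices of $R$ that miss more than $2\rho n$ vertices of $C$. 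By $\delta(G)\ge n/3-o(n)$, each of them has at least about $2\rho n$ neighbours inside $R$. Again Lemma~\ref{lem:forbidden}, applied to nonedges chosen inside $C\cup R$ together with an independent common neighbourhood found in sparse $R$, should limit how many pairwise nonadjacent such vertices exist. Their remaining structure is again a clique plus $O(s)$ vertices.

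The step I expect to be the main obstacle is converting the leftover cliques (like $K$ above) into a bounded set, because Lemma~\ref{lem:forbidden} only yields "clique plus $O(s)$ vertices" and $\omega(G[R])=o(n)$ only gives $|K|=o(n)$. I would first try to absorb such a clique into the core. A vertex of $K$ adjacent to nearly all of $R$ has degree about $2n/3$ or more. Using the rooted condition with root $C\cup\{\text{part of }K\}$, and noting that vertices of $R$ then have large forward neighbourhoods containing independent sets, I would show that all but $O(s)$ vertices of $K$ are complete to all but $O(1)$ vertices of $C$. I would then exchange these few core vertices for vertices of $K$, keeping $C$ a clique with $|C|=n/3+o(n)$. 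Whatever cannot be exchanged goes into $T$. Finally, I would iterate the cleaning once more, checking that each pass changes only $o(n)$ vertices and preserves \eqref{eq:normalmass}. The resulting $|T|$ is bounded by a function $T_0(s)$ of the constants \eqref{eq:smallconstants}, since every appeal to Lemma~\ref{lem:forbidden} produces at most $2s+2$ exceptional vertices.
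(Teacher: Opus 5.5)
\textbf{Gap: configuration (b).} This step is the real content of the lemma, because you must show that the rows violating $\sigma\le2\rho n$ are \emph{boundedly} many, and your sketch does not establish this.

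Nonedges among the bad rows themselves need not have any independent common neighbours. Their common neighbours in $C$ form a clique, and their row-neighbourhoods may be disjoint. So the nonedges must join a bad row $y$ to a core vertex $a$ that $y$ misses. Your thresholds make even these useless. You allow a core vertex to miss up to $\lambda n/2=25\cdot2\rho n$ rows, while $y$ is only guaranteed about $2\rho n$ row-neighbours, so $a$ and $y$ need not have a single common neighbour in $R$. The paper avoids this by keeping only core vertices missing at most $\rho n/4$ exterior vertices, and by working with rows of degree at least $\rho n$ in the exterior.

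Even with compatible thresholds, each such nonedge has a common neighbourhood of order only $\Theta(\rho n)$, and different nonedges may have disjoint ones. So the union bound from configuration (a) gives nothing here. The missing idea is a counting step:
\begin{enumerate}
\item Take $\ell=\lceil4(s+1)/\rho\rceil$ vertex-disjoint nonedges $a_iy_i$, and let $j_y$ count those whose endpoints are both adjacent to $y\in R$.
\item Then $\sum_y j_y\ge2(s+1)n$ and $j_y\le\ell$, so at least $n/\ell$ rows see $s+1$ of the nonedges.
\item Pigeonholing over $(s+1)$-subsets gives $s+1$ nonedges with a linear common neighbourhood.
\item That neighbourhood contains $s+2$ independent vertices, since $\chi\le\omega+s=o(n)$ there (Lemma~\ref{lem:rooted}). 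This contradicts Lemma~\ref{lem:forbidden}.
\end{enumerate}
Hence the core--row nonedge graph has matching number less than $\ell$. Deleting the endpoints of a maximal matching (rows into $T$, core vertices into the exterior) makes every remaining high-degree row complete to the core. The low-degree rows then satisfy the $\sigma$ bound by the minimum-degree hypothesis.

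Your intended conclusion, that the bad rows form ``a clique plus $O(s)$ vertices'', would not help even if proved. An $o(n)$-clique of rows, each missing more than $2\rho n$ core vertices, fits into none of $T$, $R$ or $C$. For the same reason, $T_0(s)$ is of order $\ell$, not $2s+2$ per appeal to Lemma~\ref{lem:forbidden}.

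\textbf{The obstacle you single out is easy once (b) is in place.} Apply your configuration (a) argument to the complement of $G[C\cup U]$ rather than of $G[U]$. Core vertices also miss few rows, so any $s+1$ disjoint nonedges of $G[C\cup U]$, including those between $C$ and $U$, have at least $|R|-(2s+2)\lambda n$ common neighbours in $R$, and Lemma~\ref{lem:forbidden} applies. Moving the at most $2s$ endpoints of a maximal matching of this complement into $T$ leaves a clique, so the rest of $U$ is promoted into the core all at once. No forward-neighbourhood argument, exchange, or iteration is needed; an iteration would also require you to check that it terminates with $T$ bounded. This is the paper's final step, where $U$ is in fact already complete to the core because of the matching step above.

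\textbf{A smaller fix.} Define $U$ with threshold $|R|-\lambda n$, not $|R|-\lambda n/2$. Otherwise the $o(n)$ vertices leaving $R$ can destroy $\Delta(G[R])\le|R|-\lambda n/2$.
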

\begin{proof}
Remove from $A$ every vertex missing more than $\rho n/4$ original exterior vertices. There are $o(n)$ such vertices. The remaining clique $A_0$ has order $n/3+o(n)$, and its exterior $R_0$ has $o(n^2)$ edges. Hence $\omega(G[R_0])=o(n)$. Every vertex of $A_0$ misses at most $\rho n/4$ vertices of $R_0$, since the moved core vertices are adjacent to all of $A_0$.

Let $Y_0$ consist of the vertices of $R_0$ with degree at least $\rho n$ in $G[R_0]$. Consider the bipartite graph of nonedges between $A_0$ and $Y_0$. Its matching number is less than
\[
 \ell=\left\lceil\frac{4(s+1)}\rho\right\rceil
\]
for large $n$. Suppose instead that it contains $\ell$ disjoint missing pairs. Each pair has at least $\rho n/2$ common neighbours in $R_0$. If $j_y$ counts how many pairs have both endpoints adjacent to $y$, then
\[
 \sum_{y\in R_0}j_y\ge2(s+1)n,
 \qquad 0\le j_y\le\ell.
\]
If $M$ vertices have $j_y\ge s+1$, the sum is at most $sn+\ell M$. Thus $M\ge n/\ell$. Counting the $(s+1)$-subsets of pairs seen by these vertices, some fixed $s+1$ pairs have at least
\[
 \frac{n}{\ell\binom{\ell}{s+1}}
\]
common neighbours in $R_0$. Every induced subgraph of $G[R_0]$ is colourable with at most $\omega(G[R_0])+s=o(n)$ colours, by Lemma~\ref{lem:rooted}. This linear-size common neighbourhood contains $s+2$ independent vertices for large $n$, contradicting Lemma~\ref{lem:forbidden}.

Take a maximal matching of the nonedge graph. Put its fewer than $\ell$ row endpoints in $T_1$, and move its fewer than $\ell$ core endpoints into the exterior. Call the remaining core $A_1$ and exterior $R_1$. Every retained vertex of $Y_0$ is complete to $A_1$, as is every moved core vertex. We still have
\[
 |A_1|=n/3+o(n),\qquad e(G[R_1])=o(n^2),
 \qquad\omega(G[R_1])=o(n).
\]
A retained row not originally in $Y_0$ has degree less than $\rho n+O_s(1)$ in $R_1$.

Put
\[
 U=\{x\in R_1:d_{R_1}(x)>|R_1|-\lambda n\}.
\]
The sparse edge count gives $|U|=o(n)$. Every member of $U$ is complete to $A_1$, by the preceding low-degree bound. Each vertex of $A_1\cup U$ misses at most $\lambda n$ vertices of $R_1$, counting a vertex itself as missed when it lies in $R_1$.

The complement of $G[A_1\cup U]$ has no matching of size $s+1$. Indeed, the endpoints of such a matching would have at least
\[
 |R_1|-(2s+2)\lambda n=\Omega(n)
\]
common neighbours in $R_1$. Its clique number is $o(n)$, so Lemma~\ref{lem:rooted} again supplies $s+2$ independent common neighbours, contradicting Lemma~\ref{lem:forbidden}.

Let $Z_0$ be the endpoint set of a maximal matching in this complement. Then $|Z_0|\le2s$, and
\[
 C=(A_1\cup U)\setminus Z_0,
 \qquad R=R_1\setminus U,
 \qquad T=T_1\cup Z_0
\]
is a partition with $C$ a clique and $|T|\le\ell-1+2s$. In particular, vertices removed from the promoted core go into $T$, not back into $R$.

The changes from $A$ involve $o(n)$ vertices, so the cross and exterior errors remain $o(n^2)$, and $\omega(G[R])=o(n)$. A retained low-degree row has at most $\rho n+O_s(1)$ neighbours outside $A_1\cup T_1$. Minimum degree therefore makes it adjacent to all but $\rho n+o(n)$ vertices of $A_1$. The other retained rows are complete to $A_1$. Since $|U|=o(n)$, this gives $\sigma\le2\rho n$ for large $n$. Core columns miss at most $\lambda n$ rows by construction. Finally, every row outside $U$ has degree at most $|R_1|-\lambda n$, and $|U|=o(n)$ gives
\[
 \Delta(G[R])\le |R|-\lambda n/2.
\]
\end{proof}

\subsection{Incidences of exceptional neighbourhoods}

\begin{lemma}\label{lem:incidence}
Let $C,R,X$ be disjoint vertex sets, with $C$ a clique and
$\rsd(G[C\cup R\cup X])\le s$. Put
\[
 h=|X|,\quad w=\omega(G[R]),\quad
 a=\max_{r\in R}|C\setminus N(r)|.
\]
If every $x\in X$ satisfies
\begin{equation}\label{eq:largeholes}
 |C\setminus N(x)|>a+w+h,
\end{equation}
then
\begin{equation}\label{eq:incidence}
 e(X,R)\le s|R|+h(s+w).
\end{equation}
\end{lemma}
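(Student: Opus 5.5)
The plan is to take a single $s$-defective elimination order of $H=G[C\cup R\cup X]$ ending in the clique $C$, which Lemma~\ref{lem:rooted}(ii) provides. Then count each edge of $E(X,R)$ at whichever endpoint comes earlier in the order. Every vertex of $R\cup X$ precedes all of $C$. Hence, for each $y\in R\cup X$, the forward neighbourhood $N^+(y)$ contains all of $N(y)\cap C$. The defect condition says that a maximum clique $K_y$ of $H[N^+(y)]$ misses at most $s$ vertices of $N^+(y)$. The two contributions $s|R|$ and $h(s+w)$ should come from the two possible orientations of an edge $xr$.

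\emph{Edges counted at $x\in X$.} If $x$ precedes $r$, then $r\in N^+(x)\cap R$. The vertices of $N^+(x)\cap R$ lying in $K_x$ form a clique of $G[R]$, so there are at most $w$ of them. At most $s$ further forward neighbours lie outside $K_x$. Hence each $x$ accounts for at most $s+w$ such edges, for a total of at most $h(s+w)$.

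\emph{Edges counted at $r\in R$.} Here I would show that no maximum clique of $H[N^+(r)]$ contains a vertex of $X$. Then every forward $X$-neighbour of $r$ is among the at most $s$ exceptional vertices, which gives at most $s|R|$ edges in total.
\begin{itemize}
\item Lower bound: $\omega(H[N^+(r)])\ge|N(r)\cap C|\ge|C|-a$, since $N(r)\cap C$ is a clique inside the forward neighbourhood.
\item Upper bound for a clique $K\subseteq N^+(r)$ containing some $x\in X$: the set $K\cap C$ lies in $N(x)\cap C$, so \eqref{eq:largeholes} gives $|K\cap C|<|C|-a-w-h$. Also $|K\cap R|\le w$ and $|K\cap X|\le h$.
\item Combining: $|K|<|C|-a$, so $K$ cannot be a maximum clique.
\end{itemize}

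Adding the two counts gives \eqref{eq:incidence}. The only delicate point is this comparison of clique sizes in the second case. One must use that the order ends in $C$, so that all of $N(r)\cap C$ is forward of $r$. One must also measure the defect against a genuine maximum clique rather than an arbitrary one. The hypothesis \eqref{eq:largeholes} is tailored exactly to make the strict inequality $|K|<|C|-a$ hold, so I expect no further obstacle.
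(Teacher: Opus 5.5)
Your proof is correct and is essentially the paper's argument. You use the same $s$-defective order ending in $C$ and count each $X$--$R$ edge at its earlier endpoint. You make the same clique-size comparison to show that a maximum forward clique at $r\in R$ avoids $X$, and you get the same bound $d^+_R(x)\le s+w$ at $x\in X$; splitting $N^+(x)\cap R$ along a maximum clique $K_x$ is just equivalent bookkeeping to the paper's bound $\omega\le|N_C(x)|+w+d^+_X(x)$ on the whole forward neighbourhood.
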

\begin{proof}
Take an $s$-defective order ending in $C$. At the deletion of $r\in R$, a forward-neighbourhood clique containing $x\in X$ has order at most
\[
 |N_C(x)|+w+h<|C|-a.
\]
The retained core neighbours of $r$ form a clique of order at least $|C|-a$. Hence a maximum forward-neighbourhood clique avoids $X$. Its deficiency is therefore at least the number of later $X$-neighbours of $r$, which is at most $s$.

At the deletion of $x\in X$, a forward-neighbourhood clique has order at most
\[
 |N_C(x)|+w+d_X^+(x).
\]
The forward deficiency bound gives $d_R^+(x)\le s+w$. Count each edge between $X$ and $R$ at its earlier endpoint to obtain \eqref{eq:incidence}.
\end{proof}

\begin{lemma}\label{lem:rectangle}
Let $(x_i,y_i)\in[0,1]\times[0,2]$ for $1\le i\le t$, and put
\[
 I=\{i:y_i>x_i,\ x_i<1\},\qquad h=|I|.
\]
If $\sum_{i\in I}y_i\le2s$, then
\begin{equation}\label{eq:rectangle}
 \sum_{i=1}^t|y_i-x_i|\le t+s-|h-s|.
\end{equation}
If the left side is at least $t+s$, then exactly $s$ points are $(0,2)$ and every other point is $(1,0)$ or $(1,2)$.
\end{lemma}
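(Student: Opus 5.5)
The plan is to bound each term $|y_i-x_i|$ separately, according to whether $i\in I$, and then add up. No earlier result of the paper is needed.

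First, take $i\notin I$. Then either $y_i\le x_i$, or $x_i=1$ and $y_i>1$. In the first case $|y_i-x_i|=x_i-y_i\le x_i\le1$. In the second case $|y_i-x_i|=y_i-1\le1$. So every index outside $I$ contributes at most one, and these indices contribute at most $t-h$ in total.

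Next, take $i\in I$. Here $|y_i-x_i|=y_i-x_i\le y_i\le2$. The hypothesis bounds the sum of the $y_i$ over $I$ by $2s$, and the pointwise bound gives $2h$. Hence
\[
 \sum_{i\in I}|y_i-x_i|\le\min\{2s,2h\}.
\]
Adding the two contributions gives at most $t-h+\min\{2s,2h\}$. If $h\le s$, this equals $t+h=t+s-(s-h)$. If $h\ge s$, it equals $t+2s-h=t+s-(h-s)$. In both cases it is $t+s-|h-s|$, which proves \eqref{eq:rectangle}.

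For the equality statement, suppose the left side is at least $t+s$. Then $|h-s|=0$, so $h=s$, and every inequality in the chain above must be tight.

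For $i\notin I$, tightness forces either $x_i=1,\,y_i=0$ from the first case, or $x_i=1,\,y_i=2$ from the second.

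For $i\in I$, tightness of $y_i-x_i\le y_i$ forces $x_i=0$. Tightness of $\sum_{I}y_i\le 2h=2s$, together with $y_i\le 2$, forces $y_i=2$ for every $i\in I$. So the $s$ indices of $I$ are exactly the points $(0,2)$.

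No index outside $I$ can be $(0,2)$, since such a point would lie in $I$. Hence exactly $s$ points equal $(0,2)$, and every other point is $(1,0)$ or $(1,2)$.

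There is no real obstacle here. The only care needed is the case split defining the complement of $I$, in particular the boundary case $x_i=1$ with $y_i>1$.
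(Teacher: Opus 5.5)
Your proof is correct and follows essentially the same route as the paper: you bound each summand outside $I$ by one and the summands inside $I$ by $\min\{2h,2s\}$ in total, and you read off the equality case from tightness of each piece. Your explicit case split for the complement of $I$, including the boundary case $x_i=1<y_i$, simply spells out what the paper leaves to ``the definition of $I$''.
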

\begin{proof}
Outside $I$, each summand is at most one. Inside $I$, it is at most $y_i\le2$. Therefore the sum is at most
\[
 t-h+\min\{2h,2s\}=t+s-|h-s|.
\]
Equality at $t+s$ forces $h=s$. It then forces $x_i=0,y_i=2$ on $I$, and $|y_i-x_i|=1$ outside $I$. The latter points, by the definition of $I$, are precisely $(1,0)$ and $(1,2)$.
\end{proof}

\subsection{Exact rigidity}

\begin{proposition}\label{prop:rigidity}
Fix $s\ge0$. Suppose $|G_j|=n_j\to\infty$, $\rsd(G_j)\le s$,
\[
 \delta(G_j)\ge n_j/3-o(n_j),
 \qquad \cp(G_j)\ge Q_s(n_j).
\]
Then all sufficiently large members of the sequence have the form \eqref{eq:extremal}, and their clique partition numbers equal $Q_s(n_j)$.
\end{proposition}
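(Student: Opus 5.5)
We first obtain near-extremal structure and then convert it into the exact extremal form. Since $\cp(G_j)\ge Q_s(n_j)=n_j^2/6+O(n_j)$ and $\rsd(G_j)\le s=o(n_j)$, Theorem~\ref{thm:stabilityintro} supplies cliques $A_j$ with $|A_j|=n_j/3+o(n_j)$ and $D_{A_j}+e(G_j-A_j)=o(n_j^2)$. Together with the minimum-degree hypothesis, Lemma~\ref{lem:normalform} then gives a partition $V=C\sqcup R\sqcup T$ with $|T|\le T_0(s)$, with $C$ a clique, and with the uniformity bounds \eqref{eq:normalmass}--\eqref{eq:normaluniform}. Put $c=|C|$, $b=|R|$ and $t=|T|$.

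The upper bound comes from Lemma~\ref{lem:exceptions}. We need a proper edge-colouring of $G[R]$ with $k$ colours and $c\le k\le 2(c-2\sigma-2t)$. By Lemma~\ref{lem:rooted}, $G[R]$ is $(w+s-1)$-degenerate, where $w=\omega(G[R])=o(n)$. Lemma~\ref{lem:colouring} therefore gives a colouring with $\Delta(G[R])+o(n)$ colours, and this count can be increased freely. The problem is that $\Delta(G[R])$ might exceed $2c$. We handle this by moving the high-degree rows into $T$ first. Any row of degree above $(2/3-\lambda)n$ in $G[R]$ has a large hole in $C$ by the degree count, and Lemma~\ref{lem:incidence} bounds the number of such rows by $O_s(1)$. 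The capacity condition \eqref{eq:capacitytwo} holds because $b\approx 2n/3$ while the right side is about $c/2\cdot 3+o(n)=n/2+o(n)$. Lemma~\ref{lem:exceptions} then gives
\[
 \cp(G)\le cb-\binom c2+\sum_{x\in T}|v_x-u_x|+4t^2 .
\]
This bound alone is too crude: it carries a $+4t^2$ error and drops the residual saving. In the equality analysis we therefore keep the term $-D$ and the term $-(2(c-2\sigma)/k-1)m$ from \eqref{eq:absorption}.

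Next we normalize the exceptional vertices. Set $x_i=u_x/c\in[0,1]$ and $y_i=v_x/c\in[0,2]$, approximately. The vertices counted in $I$, those with $y_i>x_i$ and $x_i<1$, are the ones with large holes in $C$. Applying Lemma~\ref{lem:incidence} to them, after first relocating vertices with small holes into $C$ or $R$, gives $\sum_{I}v_x\le s b+o(n)$, which is $\sum_I y_i\le 2s+o(1)$. Lemma~\ref{lem:rectangle} then yields
\[
 \cp(G)\le cb-\binom c2+(t+s-|h-s|)c+o(n).
\]
Absorbing the points $(1,0)$ into $C$ and $(1,2)$ into $R$ turns the main term into $c'b'-\binom{c'-s}{2}+o(n)$, where $c'$ counts the core together with the $s$ exceptional vertices. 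Comparing with the square completion \eqref{eq:quadratic}, which is maximized at $Q_s(n)$, shows that any deviation costs order $n$. Such deviations include $h\ne s$, a point bounded away from the three corners, a positive fraction of missing cross edges, and $m>0$ with strict capacity. Each of these contradicts $\cp\ge Q_s(n)$.

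The main obstacle is the exact step. The argument above delivers only near-rigidity with $o(n)$ errors, whereas equality needs vanishing defects: $D=0$, $m=0$, exactly $s$ vertices of type $(0,2)$ that are independent and complete to $R$, and $c'-s$ a clique. The plan is to rerun the counting with exact integer bookkeeping once the configuration is close. Each missing cross edge, each row edge, and each unit of deviation of a $T$-vertex from its corner saves at least a constant fraction of one part, using the strict positivity of the coefficient in \eqref{eq:absorption}, the term $-D$, and the slack in Lemma~\ref{lem:rectangle}. This saving beats the bounded error terms, which are $O(t^2)=O_s(1)$, but only after the $O_s(1)$ exceptional vertices have been treated individually rather than through $4t^2$. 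Lemma~\ref{lem:forbidden} forces the $s$ page-like vertices to be pairwise nonadjacent and their common structure to be as in \eqref{eq:extremal}. Once $G$ has the form \eqref{eq:extremal}, Lemma~\ref{lem:attainment} gives $\cp(G)=kb-\binom{k-s}2$. The requirement $\cp\ge Q_s(n)$ then forces $k$ to be a nearest integer to $(2(n+s)+1)/6$ by \eqref{eq:quadratic}, so $\cp=Q_s(n)$.
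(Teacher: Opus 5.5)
Your near-extremal analysis follows the paper's: stability, normal form, limiting profiles, then Lemma~\ref{lem:incidence} and Lemma~\ref{lem:rectangle}. The gap is the exact step, which you flag as the main obstacle but do not supply, and the route you sketch cannot supply it. As long as you work through Lemma~\ref{lem:exceptions}, the bound carries $4t^2$, and its $-D$ term has already been spent absorbing the matching losses. A single missing cross pair or row edge saves only $1$ or $\theta=O(1)$ parts, not order $n$ as your list of deviations claims for $m>0$. Saying the exceptional vertices must be ``treated individually'' names the difficulty without resolving it.

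The missing idea is to eliminate $T$ altogether. Once the limits identify the $s$ vertices $X$ of type $(0,2)$, the set $P$ of type $(1,0)$ and the set $Z$ of type $(1,2)$, take the fresh cut $C'=C\cup Z\cup X$, $R'=R\cup P$. Lemma~\ref{lem:absorption} does not need a clique core, since Lemma~\ref{lem:saturate} extends only actual core edges. The normal-form bounds survive this bounded move, so the lemma applies with no exceptional set and gives the error-free count
\[
 \cp(G)\le c'b'-e(G[C'])-D'-\theta m',\qquad \theta>0,
\]
with strictness coming from $8\rho<\lambda/3$. For this to be at most $Q_s(n)$ via \eqref{eq:quadratic}, you need $e(G[C'])\ge\binom{c'-s}{2}$, so $C\cup Z$ must be a clique.

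That is where Lemma~\ref{lem:forbidden} enters. Take a nonedge inside $C\cup Z$, and for each $x_i\in X$ a nonedge $x_ia_i$ to a missed core vertex. These are $s+1$ disjoint nonedges whose endpoints have a linear common neighbourhood in $R$, hence $s+2$ independent common neighbours. Equality then forces $D'=m'=0$, $e(G[C'])=\binom{c'-s}{2}$, and a nearest-integer $c'$. In particular, the independence of $X$ and its anticompleteness to $C\cup Z$ come from this count. Lemma~\ref{lem:forbidden} does not give them, contrary to your last paragraph.

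Three further steps are wrong as written.

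\begin{enumerate}
\item You absorb the corners backwards. A $(1,0)$ vertex sees almost all of $C$ and almost none of $R$, so it is a page and belongs in $R'$; a $(1,2)$ vertex belongs in the core. Your assignment creates core vertices missing almost all rows, so $\tau$ is of order $b$ and \eqref{eq:capacitytwo} fails. It also creates rows adjacent to almost all rows, destroying the slack in $\Delta(G[R])$ that makes $\theta>0$.
\item Moving high-degree rows into $T$ is unnecessary, since \eqref{eq:normaluniform} already gives $\Delta(G[R])\le|R|-\lambda n/2$. Its justification is also false: $\sigma\le2\rho n$ means no row has a large hole in $C$.
\item Lemma~\ref{lem:incidence} needs $|C\setminus N(x)|>a+w+h$. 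This fails for exceptional vertices with $x_i$ close to $1$ if you only know $a\le 2\rho n$. The paper first deletes the $o(n)$ rows missing more than $\eta_n n$ core vertices, so that $a=o(n)$. It also defines $I$ from the limits, so that the equality case of Lemma~\ref{lem:rectangle} applies cleanly.
\end{enumerate}
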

\begin{proof}
Suppose an infinite subsequence consists of graphs not having the asserted form. Theorem~\ref{thm:stabilityintro} supplies the cliques needed for Lemma~\ref{lem:normalform}, since $Q_s(n)=n^2/6+O_s(n)$. Use its partition $C\sqcup R\sqcup T$, and suppress sequence subscripts. Pass to a subsequence on which $t=|T|$ is constant. Label these boundedly many vertices and pass again so that every pair
\[
 \left(\frac{3u_i}{n},\frac{3v_i}{n}\right)
 \longrightarrow(x_i,y_i),
 \qquad u_i=|N_C(t_i^{(n)})|,\quad v_i=|N_R(t_i^{(n)})|,
\]
converges. Here $t_i^{(n)}$ denotes the $i$th exceptional vertex; the limits are real numbers in $[0,1]\times[0,2]$.

The row graph has degeneracy at most $\omega(G[R])+s-1$, so Lemma~\ref{lem:colouring} gives a proper edge-colouring with
\[
 k=\max\{c,\Delta(G[R])+\omega(G[R])+s\}
\]
colours. The parameters of the normal form imply
\[
 \frac{k}{n}\le\frac23-\frac\lambda2+o(1),
 \qquad
 \frac{2(c-2(\sigma+t))}{n}\ge\frac23-8\rho-o(1).
\]
Since $c=n/3+o(n)$, we have $c\ge3$ for all sufficiently large indices. Since $8\rho<\lambda/2$, the first capacity condition of Lemma~\ref{lem:exceptions} holds. For the second, $k\ge c=n/3+o(n)$ and $m=o(n^2)$ give
\[
 \frac{b-3\lceil c/2\rceil+3-4(\tau+t)-8\lceil m/k\rceil}{n}
 \ge\frac16-4\lambda-o(1)>0.
\]
Thus \eqref{eq:exceptions} applies.

Its base term is at most $M_{n-t}$, by \eqref{eq:quadratic} with $s=0$, whereas
\[
 Q_s(n)-M_{n-t}=\frac{s+t}{3}n+O_{s,t}(1).
\]
Divide \eqref{eq:exceptions} by $n/3$ after subtracting this base bound, and pass to the limit. We obtain
\begin{equation}\label{eq:profilelower}
 \sum_{i=1}^t|y_i-x_i|\ge s+t.
\end{equation}

Let $I$ be as in Lemma~\ref{lem:rectangle}, and let $X$ consist of the corresponding exceptional vertices in each graph. Each misses a positive linear number of vertices of $C$. Put
\[
 \eta_n=\sqrt{D/n^2}+n^{-1/2}.
\]
Delete from $R$ every row missing more than $\eta_n n$ core vertices. Their number is at most $D/(\eta_n n)=o(n)$. On the remaining rows the maximum core deficiency and the clique number are both $o(n)$. For large $n$, condition \eqref{eq:largeholes} holds for $X$, whose size is bounded. Lemma~\ref{lem:incidence} applies. Restoring the omitted rows changes $e(X,R)$ by $o(n)$, so division by $n/3$ gives
\[
 \sum_{i\in I}y_i\le2s.
\]
Together with \eqref{eq:profilelower}, Lemma~\ref{lem:rectangle} forces exactly $s$ limits to be $(0,2)$. Denote their vertex set by $X$, and denote the sets with limits $(1,0)$ and $(1,2)$ by $P$ and $Z$, respectively. Thus
\begin{equation}\label{eq:threeprofiles}
\begin{array}{c|cc}
 &|N_C(v)|&|N_R(v)|\\ \hline
 v\in X&o(n)&b-o(n)\\
 v\in P&c-o(n)&o(n)\\
 v\in Z&c-o(n)&b-o(n).
\end{array}
\end{equation}

The set $C\cup Z$ is a clique for all sufficiently large indices. Otherwise choose a nonedge in it. For each of the $s$ vertices $x_i\in X$, choose a distinct missed vertex $a_i\in C$, avoiding the first nonedge's endpoints. Equation~\eqref{eq:threeprofiles} gives linearly many choices. These are $s+1$ vertex-disjoint nonedges. Their endpoints have a common neighbourhood in $R$ of order at least
\[
 b-(s+2)\lambda n-o(n)=\Omega(n):
\]
old core columns miss at most $\lambda n$ rows, while $X$ and $Z$ miss $o(n)$. Since $\omega(G[R])+s=o(n)$, that common neighbourhood has $s+2$ independent vertices. This contradicts Lemma~\ref{lem:forbidden}. The same reasoning covers $s=0$, when the first nonedge alone is forbidden.

Now use a fresh partition of the original graph, with
\[
 C'=C\cup Z\cup X,\qquad R'=R\cup P.
\]
The set $C'\setminus X=C\cup Z$ is a clique of order $|C'|-s$. The preliminary matchings used to prove the numerical bound \eqref{eq:exceptions} are not retained. Denote the parameters of this new cut by primes. Equations \eqref{eq:normalmass}--\eqref{eq:normaluniform} and \eqref{eq:threeprofiles} give
\begin{gather}
 c'=n/3+o(n),\quad b'=2n/3+o(n),\quad
 \sigma'\le2\rho n+o(n),\quad \tau'\le\lambda n+o(n),\label{eq:primedeficiencies}\\
 m'=o(n^2),\quad \omega(G[R'])=o(n),\quad
 \Delta(G[R'])\le b'-\lambda n/3.
 \label{eq:primemass}
\end{gather}
For the degree assertion, old rows gain only boundedly many neighbours, while every added row in $P$ has $o(n)$ row neighbours. The other assertions follow by moving the bounded set $T$ and using its profiles.

Colour the new row graph with
\[
 k'=\max\{c',\Delta(G[R'])+\omega(G[R'])+s\}
\]
colours. Then
\[
 k'/n\le2/3-\lambda/3+o(1),\qquad
 2(c'-2\sigma')/n\ge2/3-8\rho-o(1).
\]
The first capacity inequality is strict because $8\rho<\lambda/3$. The second has normalized slack at least $1/6-4\lambda-o(1)>0$, by \eqref{eq:primemass}. Lemma~\ref{lem:absorption} yields, with $\theta=2(c'-2\sigma')/k'-1>0$,
\begin{align}
 \cp(G)
 &\le c'b'-e(G[C'])-D'-\theta m'\nonumber\\
 &\le c'(n-c')-\binom{c'-s}{2}-D'-\theta m'\nonumber\\
 &\le Q_s(n).
 \label{eq:rigidcount}
\end{align}
Here \eqref{eq:quadratic} gives the last inequality, including its integer maximum.

Our hypothesis forces equality throughout this finite count. Therefore $D'=m'=0$ and $e(G[C'])=\binom{c'-s}{2}$. The known clique $C'\setminus X$ accounts for every core edge; the $s$ vertices of $X$ are isolated within the core. All cross edges are present and the exterior is independent. Equality in the quadratic forces the allowed nearest-integer choices of $c'$. This contradicts the choice of the subsequence and proves the proposition.
\end{proof}

\subsection{The extremal theorem}

\begin{proof}[Proof of Theorem~\ref{thm:main}]
Fix $s$. We first prove the constant-error bound. Suppose $\cp(G)-Q_s(|G|)$ is unbounded on graphs with $\rsd(G)\le s$. For integers $K\to\infty$, choose a graph $G_K$ of smallest order violating
\[
 \cp(G_K)\le Q_s(|G_K|)+K.
\]
The orders tend to infinity, since there are only finitely many graphs of any bounded order. Every one-vertex deletion has defect at most $s$ and satisfies the bound with this same $K$. Covering the incident edges singly gives, for every vertex $v$,
\[
 \cp(G_K)\le Q_s(n-1)+K+d(v).
\]
Since the values are integral,
\[
 d(v)\ge Q_s(n)-Q_s(n-1)+1
 =\left\lfloor\frac{n+s+1}{3}\right\rfloor+1>n/3.
\]
Proposition~\ref{prop:rigidity} applies and gives $\cp(G_K)=Q_s(n)$ for all sufficiently large $K$, a contradiction. The difference is therefore bounded above, proving \eqref{eq:constantbound} after increasing $K_s$ to a nonnegative integer if necessary.

We now use this established bound to prove eventual exactness. If $\cp(G)\ge Q_s(n)$, apply \eqref{eq:constantbound} to $G-v$. It gives
\[
 d(v)\ge Q_s(n)-Q_s(n-1)-K_s
 \ge n/3-K_s-1.
\]
Any unbounded sequence of such graphs satisfies Proposition~\ref{prop:rigidity}. Hence, above a threshold depending on $s$, all are the graphs in \eqref{eq:extremal} and have value $Q_s(n)$. All other graphs have smaller value. Lemma~\ref{lem:attainment} supplies the attaining examples at every sufficiently large order.
\end{proof}

\begin{proof}[Proof of Corollary~\ref{cor:chordal}]
By Lemma~\ref{lem:rooted}, chordal graphs are exactly those of rooted defect zero. Substitute $s=0$ in Theorem~\ref{thm:main}.
\end{proof}

\begin{corollary}\label{cor:editstability}
For every fixed $s\ge0$ and every $\varepsilon>0$, there are $\delta>0$ and $N$ such that if $n\ge N$, $\rsd(G)\le s$, and
\[
 \cp(G)\ge Q_s(n)-\delta n^2,
\]
then at most $\varepsilon n^2$ edge additions and deletions turn $G$ into a graph of the form \eqref{eq:extremal}.
\end{corollary}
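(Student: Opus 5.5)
I would argue by contradiction along a sequence, combining Theorem~\ref{thm:stabilityintro} with the explicit structure of the graphs \eqref{eq:extremal}. Fix $s$ and $\varepsilon$. Suppose the statement fails. Then there are graphs $G_j$ of orders $n_j\to\infty$ with $\rsd(G_j)\le s$ and $\cp(G_j)\ge Q_s(n_j)-n_j^2/j$, and each $G_j$ is at edit distance more than $\varepsilon n_j^2$ from every graph of the form \eqref{eq:extremal}. Since $Q_s(n)=n^2/6+O_s(n)$, the hypotheses of Theorem~\ref{thm:stabilityintro} hold. It supplies cliques $A_j$ with
\[
 |A_j|=n_j/3+o(n_j),\qquad D_{A_j}+e(G_j-A_j)=o(n_j^2).
\]

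Next I would build the comparison graph. Let $k_j$ be a nearest integer to $(2(n_j+s)+1)/6$, so $k_j=n_j/3+O_s(1)$. I would adjust $A_j$ by adding or removing $|\,|A_j|-k_j|=o(n_j)$ vertices to get a set $A'_j$ of order exactly $k_j$. Each moved vertex changes only $O(n_j)$ adjacencies, so the total change is $o(n_j^2)$.

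Let $H_j$ be the graph on $V(G_j)$ obtained as follows. Choose $s$ vertices $X_j\subseteq A'_j$. Make $A'_j\setminus X_j$ a clique, make $X_j$ independent, and make $X_j$ anticomplete to $A'_j\setminus X_j$. Join $A'_j$ completely to the exterior, and make the exterior independent. Then $H_j\cong (K_{k_j-s}\sqcup\ind_s)\vee\ind_{n_j-k_j}$, which is of the form \eqref{eq:extremal}.

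Now I count the edits from $G_j$ to $H_j$:
\begin{itemize}
\item Inside the clique part, $A_j$ is already a clique, and resizing changes $o(n_j^2)$ pairs.
\item Making $X_j$ independent and anticomplete to the rest of $A'_j$ deletes at most $s k_j=O(n_j)$ edges.
\item The missing cross pairs number $D_{A_j}+o(n_j^2)=o(n_j^2)$.
\item The exterior edges number $e(G_j-A_j)+o(n_j^2)=o(n_j^2)$.
\end{itemize}
So the edit distance is $o(n_j^2)$, which is below $\varepsilon n_j^2$ for large $j$. This contradicts the choice of $G_j$.

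Unwinding the contradiction gives the uniform $\delta$ and $N$. If no pair $(\delta,N)$ worked, then taking $\delta=1/j$ would produce exactly the sequence above.

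The main subtlety I expect is the uniformity in Theorem~\ref{thm:stabilityintro}. That theorem is stated for sequences satisfying $\cp\ge n^2/6-o(n^2)$, and the sequence argument above is precisely what converts it into the uniform $\delta$. One could alternatively use Theorem~\ref{thm:fracstability} together with Lemma~\ref{lem:rounding} directly, which gives explicit $\delta$ and $N$.

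The only other points to check are that $k_j\ge s+1$ and $n_j-k_j\ge k_j$ for large $n_j$, so that the target graph is genuinely of the form \eqref{eq:extremal}. Both hold since $k_j=n_j/3+O_s(1)$. No rooted-defect condition needs to be verified for $H_j$, since \eqref{eq:extremal} only specifies the isomorphism type.
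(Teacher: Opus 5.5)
Your proof is correct and follows essentially the same route as the paper. You argue by sequential contradiction with $\delta=1/j$, apply Theorem~\ref{thm:stabilityintro} to obtain a near-book clique, and then spend $o(n^2)$ edits resizing it and cleaning the cross and exterior pairs. The only difference is cosmetic: you take the $s$ special vertices out of a core of order $k$, whereas the paper resizes the clique to order $k-s$ and detaches $s$ exterior vertices; both cost $O_s(n)$ edits.
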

\begin{proof}
It suffices to prove the sequential assertion with $\delta=o(1)$. Otherwise choose counterexamples with $\delta=1/j$ and order at least $j$. Theorem~\ref{thm:stabilityintro} produces a clique $A$ of order $n/3+o(n)$ such that adding its $D_A$ missing cross pairs and deleting the $e(G-A)$ exterior edges gives a book after $o(n^2)$ edits.

Choose an allowed $k$ from \eqref{eq:extremal}. Both $k-s$ and $|A|$ are $n/3+o(n)$. Move $o(n)$ vertices between the two sides to obtain a clique core of order $k-s$ and an independent exterior; changing every adjacency incident with those vertices costs $o(n^2)$. Choose $s$ exterior vertices, delete their edges to the core, and join them to all the other exterior vertices. This costs $O_s(n)$ further edits and gives \eqref{eq:extremal}.
\end{proof}

\section{An exact finite signed bound}\label{sec:integer}

The extremal profile also has a finite interpretation that needs only one defective elimination order. Define
\begin{equation}\label{eq:profileP}
 P_s(n)=\max_{0\le k\le\lfloor n/2\rfloor}
 \left\{k(n-k)-\binom{\max\{k-s,1\}}2\right\}.
\end{equation}
The values at orders zero and one are zero.

\begin{lemma}\label{lem:increments}
For nonnegative integers $n,s$,
\begin{equation}\label{eq:profileclosed}
 P_s(n)=
 \begin{cases}
 \lfloor n^2/4\rfloor,&n<2s,\\
 s(n-s)+F_{n-2s},&n\ge2s.
 \end{cases}
\end{equation}
For $n\ge1$,
\begin{equation}\label{eq:increments}
 P_s(n)-P_s(n-1)=
 \min\left\{\left\lfloor\frac n2\right\rfloor,
              \left\lfloor\frac{n+s+1}{3}\right\rfloor\right\}.
\end{equation}
In particular, $P_s(n)=Q_s(n)$ for $n\ge2s$.
\end{lemma}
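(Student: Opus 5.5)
The plan is to evaluate the maximum in \eqref{eq:profileP} by splitting the range of $k$ at $k=s+1$, then obtain \eqref{eq:increments} by direct differencing of \eqref{eq:profileclosed}.

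\emph{Splitting the maximum.} Write $g(k)=k(n-k)-\binom{\max\{k-s,1\}}2$.

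For $k\le s+1$ the correction term vanishes, so $g(k)=k(n-k)$. This is increasing up to $n/2$. Hence on $0\le k\le\min\{s+1,\lfloor n/2\rfloor\}$ the maximum is attained at $\min\{s+1,\lfloor n/2\rfloor\}$.

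For $k\ge s+1$, substitute $k=s+j$ with $j\ge1$:
\[
 g(s+j)=s(n-s)+j(n-2s-j)-\binom j2 .
\]
With $N=n-2s$, the bracket $j(N-j)-\binom j2=B_N(j)$. By \eqref{eq:quadratic} with $s=0$, its integer maximum over unrestricted $j$ is $F_N$. The maximizer is a nearest integer to $(2N+1)/6$, which lies in $[0,N/2]$ for every $N\ge0$; small $N$ can be checked directly. Since $B_N(0)=0$ and $B_N(1)=N-1$, the value $j=0$ does not beat $j=1$ when $N\ge1$, and the cases $N\le1$ are trivial. The maximizing $j$ then satisfies $s+j\le\lfloor n/2\rfloor$, because $j\le N/2$ gives $s+j\le n/2$.

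\emph{The two regimes.} When $n\ge2s$, the branch $k\ge s+1$ gives $s(n-s)+F_{n-2s}$. The branch $k\le s+1$ gives at most $g(s+1)$, which is already included in the second branch. This proves the second line of \eqref{eq:profileclosed}.

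When $n<2s$, we have $\lfloor n/2\rfloor\le s$, so every admissible $k$ lies in the first branch. The maximum is then $\lfloor n/2\rfloor\lceil n/2\rceil=\lfloor n^2/4\rfloor$.

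The identity $P_s(n)=Q_s(n)$ for $n\ge2s$ reduces to the arithmetic identity
\[
 s(n-s)+F_{n-2s}=F_{n+s}-\binom{s+1}2 .
\]
I would verify this by induction on $s$ using $F_{N+1}-F_N=\lfloor (N+2)/3\rfloor$. An alternative is to compare \eqref{eq:quadratic} before and after the shift $c=s+j$; both sides are the same integer maximum of one quadratic in $c$, since $c(n-c)-\binom{c-s}2$ equals $g(c)$ for $c\ge s+1$.

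\emph{Increments.} For \eqref{eq:increments}, difference the closed form, using $F_N-F_{N-1}=\lfloor (N+1)/3\rfloor$. There are three ranges.
\begin{enumerate}
\item For $n\ge 2s+1$, the increment is $s+\lfloor (n-2s+1)/3\rfloor=\lfloor (n+s+1)/3\rfloor$. This is at most $\lfloor n/2\rfloor$ exactly in this range.
\item For $n\le 2s-1$, the increment is $\lfloor n^2/4\rfloor-\lfloor (n-1)^2/4\rfloor=\lfloor n/2\rfloor$, which is the smaller term since $n<2s$.
\item At $n=2s$, both formulas give $s^2$. The increment is $\lfloor (2s)^2/4\rfloor-\lfloor (2s-1)^2/4\rfloor=s$, and both terms of the minimum equal $s$.
\end{enumerate}

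\emph{Main obstacle.} I expect the only real difficulty to be bookkeeping in the boundary cases. These are confirming that the nearest-integer maximizer satisfies $j\ge1$ and $s+j\le\lfloor n/2\rfloor$ for small $N=n-2s$, and handling the residues of $N$ modulo $3$ in the floor identities. I would treat $N\in\{0,1,2,3\}$ by hand.
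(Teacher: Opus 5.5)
Your proposal is correct and follows essentially the same route as the paper: evaluate the maximum through the shift $k=s+j$ and the square completion \eqref{eq:quadratic}, check small $N=n-2s$ by hand, then difference the closed form using the increments $\lfloor n/2\rfloor$ of $\lfloor n^2/4\rfloor$ and $\lfloor (N+1)/3\rfloor$ of $F_N$. Two differences are worth noting. The paper splits at $k=s$ instead of $s+1$; since $\binom{0}{2}=\binom{1}{2}=0$, this admits $j=0$ and removes your separate $j\ge1$ bookkeeping. For $P_s=Q_s$, the paper simply observes that $(n+s)(n+s+1)/6-(n-2s)(n-2s+1)/6=s(n-s)+\binom{s+1}{2}$ is an integer, which is shorter than your induction on $s$; your alternative comparing integer maxima of the same quadratic is equally short.
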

\begin{proof}
When $n<2s$, every admissible $k$ is at most $s$, so the binomial term is zero and the maximum is $\lfloor n^2/4\rfloor$. Suppose $n\ge2s$. Candidates with $k<s$ do not exceed the candidate $k=s$. Put $k=s+j$ and $m=n-2s$. For $0\le j\le\lfloor m/2\rfloor$, the objective is
\[
 s(n-s)+j(m-j)-\binom j2.
\]
The last two terms have maximum $F_m$: for $m\ge2$ a nearest integer to $(2m+1)/6$ lies in this interval, so \eqref{eq:quadratic} applies, and $m=0,1$ are immediate. This proves \eqref{eq:profileclosed}.

Successive differences of $\lfloor n^2/4\rfloor$ are $\lfloor n/2\rfloor$, and those of $F_n$ are $\lfloor(n+1)/3\rfloor$. Thus for $n>2s$ the difference is $s+\lfloor(n-2s+1)/3\rfloor=\lfloor(n+s+1)/3\rfloor$, which is no larger than $\lfloor n/2\rfloor$. For $n\le2s$ the other term is minimal, and the boundary $n=2s$ gives the same value $s$. Finally
\[
 F_{n+s}-F_{n-2s}=s(n-s)+\binom{s+1}{2},
\]
because the difference of the corresponding real quadratics is this integer. This gives $P_s(n)=Q_s(n)$ in the asserted range.
\end{proof}

\Needspace{10\baselineskip}
\begin{theorem}\label{thm:integer}
Suppose $G$ has $n$ vertices and admits an $s$-defective elimination order. If integer edge weights satisfy
\begin{equation}\label{eq:integerconstraints}
 z_e\le1,\qquad
 \sum_{e\in E(T)}z_e\le1\quad\text{for every triangle }T,
\end{equation}
then
\begin{equation}\label{eq:integerbound}
 \sum_{e\in E(G)}z_e\le P_s(n).
\end{equation}
For every $n,s$, the bound is attained by a graph with $\rsd(G)\le s$ and a signing that satisfies every clique inequality, not just the triangle inequalities.
\end{theorem}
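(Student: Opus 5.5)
The plan is to imitate the proof of Lemma~\ref{lem:localize}, replacing the fractional triangle estimate by its integral sharpening. Then I would compare the resulting quadratic bound with the closed form of $P_s(n)$ in Lemma~\ref{lem:increments}.

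\emph{Step 1: the integral constraints.} Since the weights are integers with $z_e\le1$, every positive edge has weight exactly $1$. If $vx$ and $vy$ both have weight $1$ and $xy\in E(G)$, the triangle inequality gives $z_{xy}\le-1$. Define $\alpha$ as the maximum, over vertices $v$ and cliques $C\subseteq N(v)$, of the number of weight-one edges from $v$ into $C$. Discarding non-positive spokes from a maximizing clique, we may assume all $c=|C|$ spokes from $v$ into $C$ have weight one, so $\alpha=c$. Then every edge inside $C$ has weight at most $-1$, and
\[
 Z(E(G[C]))\le-\binom c2 .
\]
This is the integral replacement of \eqref{eq:trianglestar}, with no loss term $(c-\alpha)$.

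\emph{Step 2: counting along an order.} Here lies a gap. Lemma~\ref{lem:rooted} supplies orders ending in a prescribed clique only when $\rsd(G)\le s$, but the hypothesis only gives one $s$-defective order. I would first try to avoid rooting. Count every edge outside $E(G[C])$ at its earlier endpoint in the given order $\pi$, and bound the weight-one forward edges at $x$ by
\[
 \min\{\alpha+s,\ |N^+_\pi(x)|\}.
\]
The point is that the positive neighbours inside the clique part of $N^+_\pi(x)$ form a clique in $N(x)$, so there are at most $\alpha$ of them. Edges inside $C$ contribute at most $-\binom c2$, but now need not lie at the end of the order, so the trivial bound $|N^+_\pi(x)|\le n-i$ at position $i$ must be combined carefully with the clique count. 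Summing $\min\{c+s,n-i\}$ over the first $n-c$ positions gives $(n-c)(c+s)-\binom{s+1}{2}$. If a rooted order were available, this would yield
\[
 W\le(n-c)(c+s)-\binom{s+1}2-\binom c2 .
\]
With $c'=c+s$ this equals
\[
 c'(n+s-c')-\binom{c'-s}{2}-\binom{s+1}2 ,
\]
which is to be compared with \eqref{eq:quadratic} and \eqref{eq:profileclosed}. If the unrooted count is too weak, my fallback is induction on $n$. Delete the first vertex of $\pi$ and show that its forward weight is at most the increment \eqref{eq:increments}, with the negative edges of Step 1 absorbed into a strengthened hypothesis that also records $\binom{p}{2}$ forced negative edges.

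\emph{Step 3: the regimes.} For $n<2s$ the forward-size bounds alone should give $\lfloor n^2/4\rfloor$: this is a Mantel-type count, since weight-one edges contain no triangle of $G$ with third edge non-negative. For $n\ge2s$ the maximization over integer $c$ should produce $s(n-s)+F_{n-2s}$. Large $c$, beyond $n/2$, must be handled separately, using the trivial bound $|N^+_\pi(x)|\le n-i$.

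\emph{Step 4: attainment.} Use Lemma~\ref{lem:attainment}'s graph $(K_{k-s}\sqcup\ind_s)\vee\ind_{n-k}$ with the maximizing $k$ from \eqref{eq:profileP}. Put weight $1$ on the spokes, $-1$ on the core clique edges, and $0$ elsewhere. A clique meets at most one page and $j$ core vertices, so its weight is at most $j-\binom{j}{2}+\ldots\le1$. For small $n$, or small $k$, use $K_{k,n-k}$, which has $\rsd\le s$ by Lemma~\ref{lem:joins}, with all weights $1$.

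\emph{Main obstacle.} I expect the main obstacle to be Step 2: producing the exact constant $\binom{s+1}{2}$ and the profile $P_s$ from an unrooted order. The additive constants must match exactly, not up to $O(1)$.
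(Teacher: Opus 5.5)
Your Step 2 has a worse problem than the one you flag. Even granting an order that ends in $C$, charging $\min\{c+s,|N^+_\pi(x)|\}$ at every exterior vertex loses a linear term, not just a constant. Your final expression $c'(n+s-c')-\binom{c'-s}{2}-\binom{s+1}{2}$ is \eqref{eq:quadratic} at order $n+s$, minus $\binom{s+1}{2}$. So for large $n$ its maximum is $F_{n+2s}-2\binom{s+1}{2}$, which exceeds $P_s(n)=Q_s(n)=F_{n+s}-\binom{s+1}{2}$ by about $sn/3$. Concretely, for $s=1$, $n=10$, $c=3$ it gives $24$, while $P_1(10)=21$.

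The overshoot already appears on the extremal graph. Take $C$ to be the $(k-s)$-clique. The $s$ isolated core vertices are eliminated with empty forward neighbourhoods, yet your count charges them about $sc$. Your bound then exceeds the true value $k(n-k)-\binom{k-s}{2}$ by $sk-\binom{s+1}{2}$.

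The fallback also fails as stated. $K_n$ is $0$-defective in every order. Weight the spokes at the first vertex by $1$ and all other edges by $-1$; every triangle inequality holds. That vertex nevertheless has forward weight $n-1$, far above the increment \eqref{eq:increments}. The strengthened hypothesis meant to absorb this is never formulated.

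The missing idea is to delete a vertex of \emph{minimum} signed degree $\mu=\min_x\sigma_z(x)$, where $\sigma_z(x)=\sum_{e\ni x}z_e$, and to prove $\mu\le\min\{\lfloor n/2\rfloor,\lfloor(n+s+1)/3\rfloor\}$. Lemma~\ref{lem:increments} then telescopes this to $P_s(n)$.

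The first bound is your Mantel observation: the endpoints of a positive edge have disjoint positive neighbourhoods.

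For the second bound, suppose $\mu>s$. Let $v$ be the first vertex of the given order and $Q\subseteq N(v)$ a largest clique, so at most $s$ neighbours of $v$ lie outside $Q$. Let $a_\pm$ and $r_\pm$ count the positive and negative spokes of $v$ inside and outside $Q$.
\begin{itemize}
\item Then $\mu\le\sigma_z(v)\le a_++r_+-a_--r_-$, and $\mu>s\ge r_+$ forces a positive neighbour $u\in Q$.
\item The positive neighbourhood of $u$ avoids that of $v$. The edges from $u$ to the other $a_+-1$ positive neighbours of $v$ in $Q$ are negative. Hence $\mu\le\sigma_z(u)\le n-(a_++r_+)-(a_+-1)$.
\item Twice the first inequality plus the second gives $3\mu\le n+1+r_+-2a_--2r_-\le n+s+1$.
\end{itemize}
This uses only the first vertex of one unrooted order, and induced suborders stay $s$-defective, so your rooting issue never arises.

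A minor point: Lemma~\ref{lem:joins} does not give $\rsd(K_{k,n-k})\le s$. That graph is needed only when $k\le s+1$, so use the page-first argument of Lemma~\ref{lem:attainment}, which gives $\rsd\le k-1\le s$.
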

\begin{proof}
The assertion is immediate when $n=0$, so assume $n\ge1$. Raise every coordinate below $-1$ to $-1$. This preserves \eqref{eq:integerconstraints}: an affected triangle has one coordinate equal to $-1$ and the other two at most one. The objective can only increase. We may assume all coordinates belong to $\{-1,0,1\}$.

Write $\sigma_z(v)=\sum_{e\ni v}z_e$ and $\mu=\min_v\sigma_z(v)$. The positive-edge graph is triangle-free. At the endpoints of a positive edge, the positive neighbourhoods are disjoint, so one endpoint has positive degree at most $\lfloor n/2\rfloor$. Its signed degree is no larger. If there is no positive edge, all signed degrees are nonpositive. Hence
\begin{equation}\label{eq:signedhalf}
 \mu\le\lfloor n/2\rfloor.
\end{equation}
This is the needed bound when $n\le2s+2$.

Suppose $n\ge2s+3$. If $\mu\le s$, then $\mu\le\lfloor(n+s+1)/3\rfloor$. Otherwise choose the first vertex $v$ of an $s$-defective order, and a largest clique $Q\subseteq N(v)$. Let $a_-,a_+$ count the negative and positive spokes from $v$ to $Q$, and let $r_-,r_+$ count them outside $Q$. Then
\begin{equation}\label{eq:signedlocal}
 r_-+r_+\le s,\qquad
 \mu\le a_++r_+-a_- -r_-.
\end{equation}
Since $\mu>s$, there is a positive neighbour $u\in Q$. Its edges to the other $a_+-1$ positive neighbours of $v$ in $Q$ have weight $-1$, by the triangle constraints. Its positive neighbourhood is disjoint from that of $v$, which has order $a_++r_+$. Therefore
\[
 \mu\le\sigma_z(u)\le n-(a_++r_+)-(a_+-1).
\]
Combining this with \eqref{eq:signedlocal} gives
\begin{align*}
 n&\ge\mu+2a_++r_+-1\\
  &\ge3\mu+2a_-+2r_- -r_+-1\\
  &\ge3\mu-s-1.
\end{align*}
The signed degrees are integers, so $\mu\le\lfloor(n+s+1)/3\rfloor$. Together with \eqref{eq:signedhalf}, this is the increment in \eqref{eq:increments}.

Delete a minimum-signed-degree vertex. The induced graph retains an $s$-defective order, and the signing remains feasible. Induction, starting with the empty graph, now proves \eqref{eq:integerbound} by telescoping \eqref{eq:increments}.

For sharpness, suppose $n\ge2$ and choose a maximizing $k\ge1$ in \eqref{eq:profileP}. Put $a=\max\{k-s,1\}$, and take
\[
 G=(K_a\sqcup\ind_{k-a})\vee\ind_{n-k}.
\]
The page-first argument of Lemma~\ref{lem:attainment} gives $\rsd(G)\le k-a\le s$. Give core-clique edges weight $-1$ and all spokes weight $1$. A clique containing a page and $j$ core vertices has weight $j-\binom j2\le1$; a clique contained in the core has nonpositive weight. The total signing value is $k(n-k)-\binom a2=P_s(n)$. Empty and one-vertex graphs attain zero.
\end{proof}

\begin{corollary}\label{cor:maxcut}
If $G$ admits an $s$-defective elimination order, then
\[
 2\MaxCut(G)-e(G)\le P_s(|G|).
\]
The bound is sharp at every order, even on the class $\rsd(G)\le s$.
\end{corollary}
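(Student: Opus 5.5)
The plan is to derive Corollary~\ref{cor:maxcut} from Theorem~\ref{thm:integer} by a specific integer signing. Let $(S,\bar S)$ be a maximum cut of $G$. Put $z_e=1$ on cut edges and $z_e=-1$ on the edges inside $S$ or inside $\bar S$. Then
\[
 \sum_{e\in E(G)}z_e=\MaxCut(G)-\bigl(e(G)-\MaxCut(G)\bigr)=2\MaxCut(G)-e(G).
\]

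Next we check the constraints \eqref{eq:integerconstraints}. Clearly $z_e\le1$ for every edge. A triangle meets the bipartition in a $2+1$ or $3+0$ split, so it has either exactly two cut edges or none. Its weight is therefore $1$ or $-3$, and in both cases at most $1$. Theorem~\ref{thm:integer} applies, since $G$ admits an $s$-defective order, and gives $2\MaxCut(G)-e(G)\le P_s(|G|)$. The same parity argument shows that a clique of order $j$ with $i$ vertices on one side has weight $i(j-i)-\binom i2-\binom{j-i}2$. This is at most $1$, but only triangles are needed here.

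For sharpness we reuse the extremal graph from the proof of Theorem~\ref{thm:integer}. For $n\ge2$, take $G=(K_a\sqcup\ind_{k-a})\vee\ind_{n-k}$ with $k$ maximizing \eqref{eq:profileP} and $a=\max\{k-s,1\}$. This graph has $\rsd(G)\le s$. The cut separating the $k$ core vertices from the $n-k$ pages consists of all $k(n-k)$ spokes, and only the $\binom a2$ core-clique edges lie inside it. So this cut alone gives
\[
 2\MaxCut(G)-e(G)\ge k(n-k)-\binom a2=P_s(n).
\]
Combined with the upper bound, this is equality. Orders $0$ and $1$ are trivial, with both sides zero.

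There is essentially no obstacle. The only points needing care are that triangle weights take only the values $1$ or $-3$, and that the witness graph lies in the class $\rsd(G)\le s$. The latter was already established via the page-first argument of Lemma~\ref{lem:attainment}.
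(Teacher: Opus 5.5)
Your proof is correct and follows the paper's argument: the max-cut signing satisfies the triangle constraints of Theorem~\ref{thm:integer}, and the core--page cut of the sharpness graph already attains $P_s(n)$. One aside in your proposal is false, though the proof does not use it. A clique of order $j$ split $i$ to $j-i$ has weight $\bigl(j-(j-2i)^2\bigr)/2$, which equals $2$ for a balanced $K_4$, so the max-cut signing need not satisfy the larger clique inequalities. This is why Theorem~\ref{thm:integer} is stated with triangle constraints only.
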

\begin{proof}
Give crossing edges of a maximum cut weight $1$ and internal edges weight $-1$. Every triangle has zero or two crossing edges, so \eqref{eq:integerconstraints} holds. The sharpness construction in Theorem~\ref{thm:integer} uses this signing on its core--page cut.
\end{proof}

The integer signed theorem is not a pointwise partition formula. For $G=K_3\vee\ind_2$, the core-negative, spoke-positive signing has value three. Assigning weight $1/2$ to each of the six page triangles gives an exact fractional partition of value three, which also bounds every signed clique functional. But $\cp(G)=4$. A four-clique and the three remaining spokes give four parts. Three parts are impossible: a four-clique leaves three separate spokes, while without a four-clique all nine edges would have to form three triangles, contrary to the odd degrees of the two pages. Thus the finite signed result does not replace the integral construction in Theorem~\ref{thm:main}.

\section{Open problems}

The attaining graphs in Theorem~\ref{thm:integer} also have clique partition number $P_s(n)$: there are at least as many pages as core vertices, so the modular edge-colouring construction attains the page-edge lower bound. Theorem~\ref{thm:main} proves the matching upper bound eventually for each fixed $s$. The exact finite problem remains.

\begin{conjecture}
For all nonnegative integers $n,s$,
\[
 \max\{\cp(G):|G|=n,\ \rsd(G)\le s\}=P_s(n).
\]
\end{conjecture}

The eventual equality classification need not hold at small orders. For example, $\cp(P_4)=3=F_4$, but $P_4$ is not a book.

A quantitative version of Corollary~\ref{cor:editstability} would relate the number of adjacency changes needed to reach \eqref{eq:extremal} to
\[
 Q_s(n)-\cp(G).
\]
Determine the optimal dependence on this deficit, on $n$, and on $s$. The proof above gives qualitative stability, but does not give a sharp rate or a useful numerical threshold for eventual exactness.

\section*{Acknowledgments}

The author thanks Dimitrios Los for carefully reviewing an earlier version of this manuscript.

\end{document}